\newtheorem{theorem}{Theorem}[section]
\newtheorem{lemma}[theorem]{Lemma}
\newtheorem{prop}[theorem]{Proposition}
\newtheorem{conjecture}[theorem]{Conjecture}
\theoremstyle{remark}
\theoremstyle{definition}
\newcommand{\cA}{\mathcal{A}}
\newcommand{\cE}{\mathcal{E}}
\newcommand{\eps}{\epsilon}
\newcommand{\bN}{\mathbb{N}}
\title[Prime factors of sums of proper divisors]{On the number of prime factors of values of the sum-of-proper-divisors function}
\begin{document}

\author{Lee Troupe}
\address{Department of Mathematics, Boyd Graduate Studies Research Center, University of Georgia, Athens, GA 30602, USA}
\email{ltroupe@math.uga.edu}
\let\thefootnote\relax\footnote{The author was partially supported by NSF Grant DMS-1344994 of the Research and Training Group in Algebra, Algebraic Geometry, and Number Theory, at the University of Georgia.}

\begin{abstract}
Let $\omega(n)$ (resp. $\Omega(n)$) denote the number of prime divisors (resp. with multiplicity) of a natural number $n$. In 1917, Hardy and Ramanujan proved that the normal order of $\omega(n)$ is $\log\log n$, and the same is true of $\Omega(n)$; roughly speaking, a typical natural number $n$ has about $\log\log n$ prime factors. We prove a similar result for $\omega(s(n))$, where $s(n)$ denotes the sum of the proper divisors of $n$: For all $n \leq x$ not belonging to a set of size $o(x)$,
\[
|\omega(s(n)) - \log\log s(n)| < \eps \log\log s(n),
\]
and the same is true for $\Omega(s(n))$.
\end{abstract}

\maketitle

\section{Introduction.}

Let $s(n)$ denote the sum of the proper divisors of a positive integer $n$. The function $s(n)$ has been of interest to number theorists since antiquity; for example, the ancient Greeks wanted to know when $s(n) = n$, calling such integers \emph{perfect}. In modern times, open problems concerning $s(n)$ abound, such as the famous Catalan-Dickson conjecture \cite{dic13}: For any positive integer $n$, the aliquot sequence at $n$ (that is, the sequence of iterates of the function $s$ on $n$) either terminates at 0 or is eventually periodic.

Another conjecture pertaining to $s(n)$ is the following, due to Erd\"os, Granville, Pomerance and Spiro \cite{egps90}:

\begin{conjecture}\label{egps}
If $\cA$ is a set of natural numbers of asymptotic density zero, then $s^{-1}(\cA)$ also has density zero.
\end{conjecture}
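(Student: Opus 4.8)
The plan is to bound the counting function $\#\{n\le x : s(n)\in\cA\}$ directly, trading the density hypothesis on $\cA$ against the anatomy and concentration of the values $s(n)=\sigma(n)-n$. First I would discard a negligible exceptional set: since the distribution function of $\sigma(n)/n$ is continuous, for any $\eta>0$ one may choose a constant $C=C(\eta)$ so that all but at most $\eta x$ integers $n\le x$ satisfy $s(n)\le Cx$; primes and other atypically deficient $n$ (which number $o(x)$) can likewise be set aside. It therefore suffices to estimate the contribution of the good $n\le x$, for which $s(n)$ lies in the window $[1,Cx]$ of length linear in $x$.

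For the good $n$, write $N(m)=\#\{n\le x : s(n)=m\}$ for the number of preimages of $m$ under $s$. Then
\[
\#\{n\le x : s(n)\in\cA,\ n\text{ good}\}\ \le\ \sum_{m\in\cA\cap[1,Cx]} N(m).
\]
I would split this sum at a threshold $T$ according to the multiplicity $N(m)$. For the low-multiplicity values, the density hypothesis gives $\#(\cA\cap[1,Cx])\le\delta x$ for any prescribed $\delta$ once $x$ is large, so this part is at most $T\delta x$. For the high-multiplicity values I would use the elementary inequality $N(m)\le N(m)^2/T$ to obtain
\[
\sum_{m:\,N(m)>T} N(m)\ \le\ \frac1T\sum_{m} N(m)^2\ =\ \frac1T\,\#\{(n_1,n_2):\, n_1,n_2\le x,\ s(n_1)=s(n_2)\}.
\]
Thus everything reduces to the \emph{collision count} for $s$, namely the number of pairs $(n_1,n_2)$ with equal $s$-value. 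If this count were $\ll x$, one could optimize $T\asymp\sqrt{1/\delta}$ and then let the sparsity parameters $\delta,\eta$ tend to zero, concluding that $s^{-1}(\cA)$ has density zero.

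The main obstacle is precisely this collision bound. Heuristically it is harmless: if the roughly $x$ values $s(n)$ behaved like independent samples in a window of length $\asymp x$, a birthday-paradox count would give $\ll x$ collisions, exactly what is needed. But proving an unconditional bound of the shape
\[
\#\{(n_1,n_2):\, n_1\ne n_2\le x,\ s(n_1)=s(n_2)\}\ \ll\ x
\]
is out of reach of present methods; it amounts to showing that $s$ has no atypically popular values, and a density-zero $\cA$ is free to consist entirely of such popular targets. This is the arithmetic heart of the conjecture, and it explains why the normal-order theorem for $\omega(s(n))$ stated in the abstract can be obtained unconditionally: that result needs only the typical factorization of $s(n)$ together with the equidistribution of $s(n)$ in residue classes to small moduli, never a uniform control of the fibers of $s$. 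For structured $\cA$, such as unions of arithmetic progressions, this same equidistribution lets one bypass collisions and verify the conjecture; the difficulty is to treat an arbitrary sparse $\cA$, where no such structure is available.
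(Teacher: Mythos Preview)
The statement you are attempting to prove is labeled a \emph{Conjecture} in the paper, and the paper does not prove it; immediately after stating it the authors write that ``Conjecture~\ref{egps} remains intractable'' and proceed instead to establish Theorem~\ref{normalorder} unconditionally by other means. So there is no proof in the paper against which to compare your attempt.

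Your write-up is not a proof either, and you say so yourself: the reduction to the collision count
\[
\#\{(n_1,n_2):\ n_1\ne n_2\le x,\ s(n_1)=s(n_2)\}\ \ll\ x
\]
is clean, but as you correctly note, this bound is not known. Your diagnosis of the obstruction --- that a density-zero $\cA$ is free to live entirely on popular values of $s$, so one needs uniform control of the fibers of $s$ rather than average control --- is exactly right, and your closing remark that the paper's normal-order result only requires equidistribution of $s(n)$ in residue classes to small moduli (and hence sidesteps the fiber problem) matches the paper's actual strategy. In short: you have accurately explained why this is a conjecture rather than a theorem, but you have not closed the gap, and neither does the paper.
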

Recall now Hardy and Ramanujan's normal order result for $\omega(n)$, where $\omega(n)$ denotes the number of distinct prime divisors of $n$ (see \cite{hr17}):

\begin{theorem}
For any $\eps > 0$ and all numbers $n$ not belonging to a set of asymptotic density zero,
\[
|\omega(n) - \log\log n| < \eps \log\log n.
\]
\end{theorem}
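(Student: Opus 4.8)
The plan is to prove this by Turán's second-moment method: I will compare the average and the variance of $\omega(n)$ over $n \le x$ against the target value $\log\log x$, deduce the concentration statement from Chebyshev's inequality, and transfer from $\log\log x$ to $\log\log n$ at the very end. First I would compute the average. Writing $\omega(n) = \sum_{p \mid n} 1$ and interchanging the order of summation,
\[
\sum_{n \le x} \omega(n) = \sum_{p \le x} \left\lfloor \frac{x}{p} \right\rfloor = x \sum_{p \le x} \frac{1}{p} + O(\pi(x)).
\]
By Mertens' theorem, $\sum_{p \le x} 1/p = \log\log x + O(1)$, so the average of $\omega(n)$ over $n \le x$ is $\log\log x + O(1)$; this already identifies $\log\log x$ as the correct normalization.

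The crux is the second moment. Expanding $\omega(n)^2 = \sum_{p \mid n}\sum_{q \mid n} 1$ and separating the diagonal $p = q$ (which contributes $\omega(n)$ back) from the off-diagonal terms, and noting that an off-diagonal term counts integers $n \le x$ divisible by $pq$ for distinct primes $p \neq q$ (which forces $pq \le x$), I obtain
\[
\sum_{n \le x} \omega(n)^2 = \sum_{n \le x}\omega(n) + \sum_{\substack{p \ne q \\ pq \le x}} \left\lfloor \frac{x}{pq} \right\rfloor.
\]
Sandwiching $\sum_{pq \le x} 1/(pq)$ between $\bigl(\sum_{p \le \sqrt x} 1/p\bigr)^2$ and $\bigl(\sum_{p \le x} 1/p\bigr)^2$ and applying Mertens again shows the main term is $x(\log\log x)^2 + O(x\log\log x)$. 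Combining this with the mean computation, the variance satisfies
\[
\sum_{n \le x}\left(\omega(n) - \log\log x\right)^2 \ll x\log\log x.
\]

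Finally, Chebyshev's inequality converts the variance bound into the desired density statement: the number of $n \le x$ for which $|\omega(n) - \log\log x| \ge \tfrac{\eps}{2}\log\log x$ is at most
\[
\frac{4}{\eps^2 (\log\log x)^2}\sum_{n \le x}\left(\omega(n) - \log\log x\right)^2 \ll \frac{x}{\eps^2\log\log x} = o(x).
\]
To replace $\log\log x$ by $\log\log n$, I would discard the $O(\sqrt x)$ integers $n \le \sqrt x$; for the remaining $n$ one has $\log\log n = \log\log x + O(1) = (1+o(1))\log\log x$, so the inequality with $\tfrac{\eps}{2}$ and $\log\log x$ implies the stated inequality with $\eps$ and $\log\log n$ for all but $o(x)$ values of $n \le x$.

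The main obstacle is the second-moment computation. One must check that the off-diagonal prime-pair sum genuinely reproduces $(\log\log x)^2$ up to an admissible error of size $O(\log\log x)$, which requires care with the constraint $pq \le x$ (rather than $p, q \le x$ independently) and with the accumulated error from the floor functions: an unrestricted double sum would contribute a floor error of size $O(\pi(x)^2) = O(x^2/(\log x)^2)$, which would swamp the main term, whereas restricting to $pq \le x$ limits this error to the count of integers up to $x$ with two prime factors, which is $o(x)$.
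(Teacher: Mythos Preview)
Your argument is correct: this is Tur\'an's classical second-moment proof of the Hardy--Ramanujan theorem, and each step (mean via Mertens, second moment with the $pq \le x$ floor-error control, Chebyshev, and the transfer from $\log\log x$ to $\log\log n$) is handled properly.

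Note, however, that the paper does not actually supply a proof of this statement; it is quoted as the classical 1917 result of Hardy and Ramanujan and serves only as motivation for the analogous Theorem~\ref{normalorder} on $\omega(s(n))$. That said, your template is exactly the one the paper adopts for its own results: Theorem~\ref{sumomegasn} computes the mean of $\omega(s(n))$, Lemma~\ref{sumomegasnsq} the second moment, and Theorem~\ref{variance} packages these into a variance bound from which the normal-order statement follows by Chebyshev. So while there is no proof in the paper to compare against, your approach is fully in line with the paper's methodology.
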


Then Conjecture \ref{egps} would imply the following:

\begin{theorem}\label{normalorder}
For any $\eps > 0$ and all numbers $n \leq x$ not belonging to a set of size $o(x)$,
\[
|\omega(s(n)) - \log\log s(n)| < \eps \log\log s(n).
\]
\end{theorem}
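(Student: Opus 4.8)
The plan is to prove the result by the method of moments --- a Tur\'an--Kubilius-type computation carried out for the values $s(n)$ rather than for $n$ itself. First I would remove the distinction between $\log\log s(n)$ and $\log\log x$. For all $n\le x$ outside a set of size $o(x)$ one has $n>\sqrt x$ and least prime factor $P^-(n)\le\log x$, whence $n/\log x\le n/P^-(n)\le s(n)<\sigma(n)\ll n\log\log n$, so that $\log s(n)=\log n+O(\log\log x)$ and therefore $\log\log s(n)=\log\log x+O(1)=(1+o(1))\log\log x$. It thus suffices to show that $\omega(s(n))=(1+o(1))\log\log x$ for all $n\le x$ off a set of size $o(x)$; the statement for $\Omega$ follows because $\sum_{n\le x}\bigl(\Omega(s(n))-\omega(s(n))\bigr)=\sum_{a\ge2}\sum_p\#\{n\le x:p^a\mid s(n)\}\ll x$, so the two functions differ by $o(\log\log x)$ off a negligible set.

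Writing $\omega(s(n))=\sum_p\mathbf 1_{p\mid s(n)}$, both moments reduce to the counting function $D(d):=\#\{n\le x:d\mid s(n)\}$ for squarefree $d$. Granting an estimate of the shape $D(d)=x/d+(\text{error})$ with errors that are small once summed, the first moment becomes $\sum_{n\le x}\omega(s(n))=\sum_p D(p)\sim x\sum_{p\le x}1/p\sim x\log\log x$, and, using $\omega(m)^2=\sum_{p\mid m}1+\sum_{p\ne q,\,pq\mid m}1$, the second moment becomes $\sum_{n\le x}\omega(s(n))^2=\sum_p D(p)+\sum_{p\ne q}D(pq)\sim x\bigl(\sum_{p\le x}1/p\bigr)^2\sim x(\log\log x)^2$. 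Together these give $\sum_{n\le x}\bigl(\omega(s(n))-\log\log x\bigr)^2=o\bigl(x(\log\log x)^2\bigr)$, and Chebyshev's inequality yields the normal order.

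The heart of the matter is thus the estimate $D(p)\sim x/p$ (and $D(pq)\sim x/(pq)$). To obtain it I would condition on a prime $q$ dividing $n$ exactly once: writing $n=qm$ with $q\nmid m$ gives $s(n)=(1+q)\sigma(m)-qm$, so $p\mid s(n)$ is equivalent to the linear congruence $q\,s(m)\equiv-\sigma(m)\pmod p$, i.e.\ to $q$ lying in a single residue class modulo $p$ whenever $p\nmid s(m)$ (the degenerate $m$ with $p\mid s(m)$ contribute negligibly). Summing the resulting counts of primes $q$ in arithmetic progressions modulo $p$ over the admissible $m$ should reproduce $x/p$, after the bookkeeping needed to count every $n$ exactly once (e.g.\ by letting $q$ run over a prime factor in a dyadic interval and summing over scales).

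The main obstacle is the range of $p$. Since $\sum_{p\le(\log x)^A}1/p\asymp\log\log\log x=o(\log\log x)$, the main term $\log\log x$ is produced not by small primes but by primes as large as $x^{1-\eps}$, where equidistribution of primes $q$ in residue classes modulo $p$ lies far beyond the Siegel--Walfisz range and is inaccessible for individual $p$. My resolution is to avoid pointwise equidistribution entirely and to bound the errors only in aggregate, through a Barban--Davenport--Halberstam / large-sieve average over the modulus $p$ (valid for moduli up to $x^{1-\eps}$); this is exactly enough to force $\sum_{p\le x^{1-\eps}}|D(p)-x/p|=o(x\log\log x)$ and, for the variance, $\sum_{p\ne q}|D(pq)-x/(pq)|=o\bigl(x(\log\log x)^2\bigr)$. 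The few prime factors $p>x^{1-\eps}$ of $s(n)$ are harmless, since their number is $\le\log s(n)/\bigl((1-\eps)\log x\bigr)\ll1$, contributing $O(x)$ to the first moment and $O(x\log\log x)$ to the second. Carrying out this averaged equidistribution, and checking that the large-sieve input is strong enough to beat the extra factor of $\log\log x$, is where essentially all the difficulty resides.
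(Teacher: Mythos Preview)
Your overall framework matches the paper's: reduce $\log\log s(n)$ to $\log\log x$, compute first and second moments of $\omega(s(n))$ by writing $n=(\text{distinguished prime})\cdot m$ so that $p\mid s(n)$ becomes a congruence on that prime modulo $p$, and finish by Chebyshev. The paper takes the distinguished prime to be $P(n)$, the largest prime factor (restricting to $n$ outside an explicit set $\cE(x)$ on which $P(n)>x^{1/\log_3 x}$, $P(n)^2\nmid n$, etc.), which gives a unique decomposition and avoids your dyadic bookkeeping.

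Where your proposal goes wrong is the range of $p$. You correctly observe that $\sum_{p\le(\log x)^A}1/p\asymp\log_3 x$ is too small, but then overshoot to $x^{1-\eps}$: since $\sum_{p\le Y}1/p=\log\log Y+O(1)$, truncating at $Y=x^{1/\sqrt{\log_2 x}}$ already gives $\log\log Y=\log_2 x-\tfrac12\log_3 x\sim\log_2 x$. This is exactly the paper's Lemma~\ref{truncate}. With moduli $p\le x^{1/\sqrt{\log_2 x}}$ and the free prime $P>x^{1/\log_3 x}$, pointwise equidistribution \emph{is} available---not via Siegel--Walfisz but via a Linnik-type consequence of the classical zero-free region (the paper's Theorem~\ref{siegel}): $\pi(X;q,a)\sim\pi(X)/\varphi(q)$ whenever $\log X/\log T\to\infty$ and $q\le T^{2/3}$, except for $q$ divisible by a single integer $q_1(T)$. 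Since the moduli here are prime, this exclusion costs at most one value of $p$.

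So the ``main obstacle'' you identify is illusory, and the Barban--Davenport--Halberstam route is unnecessary. It is also not obviously workable as stated: BDH averages over \emph{all} residue classes modulo $p$, whereas the class $-\sigma(m)s(m)^{-1}\bmod p$ is determined by $m$, so Bombieri--Vinogradov would be the more relevant average; and for the second moment your truncation would allow $pq$ near $x^{2(1-\eps)}$, larger than the range of the free prime, so that counting primes in progressions modulo $pq$ becomes vacuous. The paper's truncation keeps $pq\le x^{2/\sqrt{\log_2 x}}$, comfortably small. Finally, your one-line dismissal of $\Omega-\omega$ presumes $\#\{n\le x:p^a\mid s(n)\}\ll x/p^a$, which is itself nontrivial; the paper devotes \S5 to this, leaning on an upper bound of the shape $\#\{n\le x:q\mid s(n)\}\ll x\,\tau(q)\log_3 x/\varphi(q)$ (Proposition~\ref{pollack27}).
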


Though Conjecture \ref{egps} remains intractable, we are able to prove Theorem \ref{normalorder} unconditionally in the present article.

If a number $n$ is composite, then $n$ has a proper divisor greater than $n^{1/2}$; so for all composite $n \in (x^{1/2}, x]$, we have $x^{1/4} \leq n^{1/2} \leq s(n) \leq x^2$. Hence, for all but $o(x)$ numbers $n$ up to $x$, $\log\log s(n) = \log\log x + O(1).$
Therefore, it suffices to show that, given $\eps > 0$,
\begin{equation}\label{normalorderx}
|\omega(s(n)) - \log\log x| < \eps \log\log x
\end{equation}
for all $n$ except those belonging to a set of size $o(x)$.

Our normal order result follows from the following estimate.

\begin{theorem}\label{variance}
As $x \to \infty$,
\[
\sum_{n \leq x \,:\, n \notin \cE(x)} (\omega(s(n)) - \log\log x)^2 = o(x(\log\log x)^2).
\]
where $\cE(x) \subset \{1, 2, \ldots, \lfloor x \rfloor\}$ is of size $o(x)$.
\end{theorem}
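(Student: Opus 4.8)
The plan is to adapt Tur\'an's second-moment proof of the Hardy--Ramanujan theorem. Write $L = \log\log x$. Expanding the square gives
\[
\sum_{n \le x,\, n\notin\cE(x)} (\omega(s(n)) - L)^2 = M_2 - 2L\,M_1 + (\#\{n\le x : n\notin\cE(x)\})\,L^2,
\]
where $M_1 = \sum_{n\notin\cE(x)}\omega(s(n))$ and $M_2 = \sum_{n\notin\cE(x)}\omega(s(n))^2$. Since $\omega(s(n)) = \sum_{p\mid s(n)} 1$, both moments reduce, after interchanging summation, to counting
\[
N(x;d) = \#\{n \le x,\, n\notin\cE(x) : d \mid s(n)\}
\]
for squarefree $d$ that are products of at most two primes: one gets $M_1 = \sum_p N(x;p)$ and $M_2 = \sum_p N(x;p) + \sum_{p\ne q} N(x;pq)$. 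So everything rests on showing $N(x;d)$ has the ``random'' density $\sim 1/d$.

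The first thing I would do is fix the exceptional set. Let $y = x^{1/\log\log x}$, and place into $\cE(x)$ every $n\le x$ whose largest prime factor $P = P^{+}(n)$ satisfies $P \le y$ or $P^2 \mid n$; by Dickman's estimate the first kind number $x\rho(\log\log x) = o(x)$ and the second number $\ll x/y = o(x)$, so $|\cE(x)| = o(x)$. For every surviving $n$ I may write $n = Pm$ with $P = P^{+}(n) > y$ and $P\nmid m$, and then use the key multiplicativity identity
\[
s(Pm) = (P+1)\sigma(m) - Pm = P\,s(m) + \sigma(m).
\]
Thus for a prime $p$ one has $p\mid s(n)$ if and only if $P\,s(m) \equiv -\sigma(m)\pmod p$. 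When $p\nmid s(m)$ this pins $P$ to a single residue class $a_m \bmod p$, and likewise to one class $\bmod\, d$ for $d=pq$ by the Chinese Remainder Theorem; the degenerate cases $p\mid s(m)$ must be shown to contribute negligibly.

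The heart of the argument is the equidistribution estimate. I would take $z = x^{1/(5\log\log x)}$, so that every modulus $d \le z^2 = x^{2/(5\log\log x)}$ lies within the Bombieri--Vinogradov range relative to $x/m \ge y = x^{1/\log\log x}$ (note $z^2 < (x/m)^{1/2}$ for all surviving $m$). Counting the primes $P \le x/m$ in the class $a_m \bmod d$ via Bombieri--Vinogradov yields $N(x;d) = \frac{x}{\varphi(d)}(1+o(1))$ on average, the accumulated error $\sum_{d\le z^2}\sum_m (x/m)(\log(x/m))^{-A}$ being $o(x)$ because $\log(x/m) \gg \log x/\log\log x$. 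Summing, $\sum_{p\le z}N(x;p) = x\log\log z\,(1+o(1))$ and $\sum_{p\ne q\le z}N(x;pq) = x(\log\log z)^2(1+o(1))$; it is essential here that the density be asymptotically $1/p$ (and not some other multiple $c/p$), since the mean $a = \log\log z = L - \log(5L) + O(1)$ must track $L$. Decomposing $\omega = \omega_{\le z} + \omega_{>z}$ by the size of prime factors, the truncated moments give $\sum_{n\notin\cE}(\omega_{\le z}(s(n)) - L)^2 = x\bigl[(a-L)^2 + a\bigr](1+o(1)) = o(xL^2)$, since $(a-L)^2 \approx (\log 5L)^2$ and $a \sim L$ are both $o(L^2)$.

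The main obstacle I expect is the tail $\sum_{n\notin\cE}\omega_{>z}(s(n))^2$, coming from prime factors beyond the Bombieri--Vinogradov range; by Cauchy--Schwarz this also controls the cross term, so it remains to bound it by $o(xL^2)$. The trivial bound $\omega_{>z}(s(n)) \le \log s(n)/\log z \ll \log\log x$ is useless here (it yields only $O(xL^2)$), so I would split the large primes into a moderate range $(z, x^{c}]$, where the Brun--Titchmarsh inequality gives $N(x;p) \ll x/p$ up to logarithmic and degenerate-case losses, and a range $p > x^{c}$, where $\omega_{>x^{c}}(s(n)) \le \log s(n)/(c\log x) = O(1)$ forces the contribution down to $O(x)$. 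As $\sum_{z < p} 1/p = \log\log\log x + O(1) = o(L)$, these combine to give $\sum_{n\notin\cE}\omega_{>z}(s(n))^2 = o(xL^2)$ and complete the variance estimate. The delicate points are the uniformity of Brun--Titchmarsh when $x/m$ is close to $p$, and the proof that the $m$ with $p\mid s(m)$ contribute $o(1)$ of each moment.
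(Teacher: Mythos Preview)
Your skeleton---expand the square, write $n=Pm$ with $P=P^+(n)$, and reduce to counting primes $P$ in a residue class modulo $p$ or $pq$---is exactly the paper's. The substantive difference is the equidistribution input and, tied to it, the truncation height. The paper does \emph{not} use Bombieri--Vinogradov; it uses a pointwise Siegel--Walfisz/Linnik-type estimate (valid for all moduli below $T^{2/3}$ except multiples of one exceptional $q_1(T)$) with $T=x^{3/\sqrt{\log_2 x}}$. This lets the paper truncate at $x^{1/\sqrt{\log_2 x}}$, so every $n$ has at most $2\sqrt{\log_2 x}=o(L)$ prime factors of $s(n)$ beyond the cutoff, and the tail contribution to the second moment is $o(xL^2)$ for free. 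Your choice $y=x^{1/L}$ forces $z=x^{O(1/L)}$ (to keep $z^2\le (x/m)^{1/2}$), and then $\omega_{>z}(s(n))$ can be as large as $\asymp L$, so you genuinely need the Brun--Titchmarsh argument you sketch. That argument is shakier than you indicate: when $x/m$ is comparable to $p$ the Brun--Titchmarsh bound degenerates, and the trivial integer count picks up a $\sum_m 1$ term of size $x/y=x^{1-1/L}$, which, summed over $p\le x^{c}$ for fixed $c$, explodes. The clean fix is to take $y=x^{1/\log_3 x}$ as the paper does; then (even with BV) you may truncate at $z=x^{1/O(\log_3 x)}$ and the tail is $O(\log_3 x)=o(L)$ pointwise, eliminating the problem.

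Your exceptional set is also too thin. The paper adds conditions on $P_2(n)$ (large, not too large, and squarefree in $n$) so that, writing $m=n/P$, one has $x^{1/\log_3 x}<P(m)<x/(2m)$; this is what makes $\pi(x/m)-\pi(P(m))$ a genuine main term and lets the equidistribution result apply at both endpoints. More importantly, the ``degenerate'' case $p\mid s(m)$ is not merely a nuisance to be bounded: the paper inserts a condition (if a prime $q$ divides $\gcd(m,\sigma(m))$ then $q<\log_2 x$) into $\cE(x)$, and then observes that $p\mid s(n)$ together with $p\mid s(m)$ forces $p\mid\sigma(m)$ and hence $p\mid m$, contradicting this condition for $p>\log_2 x$. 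So with the right $\cE(x)$ the degenerate sum is \emph{empty}, not merely small. Without something like this, your claim that ``the $m$ with $p\mid s(m)$ contribute $o(1)$ of each moment'' needs an argument you have not supplied.
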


For large enough $x$, if there are more than $\delta x$ numbers up to $x$ for which (\ref{normalorderx}) fails, where $\delta > 0$ is any positive number, then at least $\tfrac{\delta}{2}x$ of them (say) do not belong to $\cE(x)$, whence
\[
\sum_{n \leq x \,:\, n \notin \cE(x)} (\omega(s(n)) - \log\log x)^2 \geq \tfrac{\delta}{2} \eps^2 x (\log\log x)^2;
\]
this contradicts Theorem \ref{variance} for sufficiently large $x$, and Theorem \ref{normalorder} follows.

\bigskip

\noindent\textbf{Remark.} As one might expect, we obtain a true statement by replacing $\omega(s(n))$ with $\Omega(s(n))$ in the theorem above; this is shown in \S 5.
\bigskip

\noindent\textbf{Notation.} The letters $p$ and $q$ will denote prime variables unless otherwise stated. The symbols $P(n)$ and $P_2(n)$ will denote the largest and second-largest prime factors of $n$, respectively; if $n = 1$, we set $P(n) = P_2(n) = 1$, and if $n$ is prime, we set $P_2(n) = 1$. For $x > 0$, define $\log_1x = \max\{1, \log x \}$, where $\log x$ denotes the natural logarithm of $x$, and let $\log_kx$ denote the $k$th iterate of $\log_1$. (In particular, $\log_k$ is not to be confused with the base-$k$ logarithm.) Sums over $n \notin \cE(x)$ carry the implicit condition that $n \leq x$. The symbols $\ll$ and $\sim$ have their usual meanings, and we will make frequent use of $O$ and $o$-notation. Other notation may be introduced as necessary.

\section{Preliminaries.}

First we show that it suffices to consider a truncated version of the sum in question.

\begin{lemma}\label{truncate}
\[
\sum_{n \leq x} \omega(s(n)) = \left(\sum_{\log_2x < p \leq x^{1/\sqrt{\log_2 x}}} \sum_{\substack{n \leq x \\ p \mid s(n)}} 1\right) + o(x\log_2 x).
\]
\end{lemma}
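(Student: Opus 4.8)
The plan is to start from the elementary identity
\[
\sum_{n \le x} \omega(s(n)) = \sum_{p} \#\{n \le x : p \mid s(n)\},
\]
which follows by writing $\omega(s(n)) = \sum_{p \mid s(n)} 1$ and interchanging the order of summation; this is legitimate since all terms are nonnegative and, for fixed $n$, only finitely many primes $p$ satisfy $p \mid s(n)$ (the integers with $s(n) \le 1$, namely $n = 1$ and the primes, contribute nothing on either side). The right-hand side of the lemma is exactly this double sum with $p$ restricted to the range $\log_2 x < p \le x^{1/\sqrt{\log_2 x}}$, so it suffices to show that the primes $p \le \log_2 x$ and the primes $p > x^{1/\sqrt{\log_2 x}}$ each contribute only $o(x \log_2 x)$. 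The virtue of the lemma is that both of these tail estimates are crude and require no information about the distribution of $s(n)$ in residue classes — the delicate input that the later sections must supply for the surviving middle range.

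For the small primes I would invoke nothing beyond the trivial bound $\#\{n \le x : p \mid s(n)\} \le x$. Since there are $\pi(\log_2 x) \ll \log_2 x / \log_3 x$ primes not exceeding $\log_2 x$, their total contribution is at most
\[
x\,\pi(\log_2 x) \ll \frac{x \log_2 x}{\log_3 x} = o(x \log_2 x).
\]

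For the large primes I would interchange the order of summation once more and write the contribution as $\sum_{n \le x} \#\{p > T : p \mid s(n)\}$, where $T := x^{1/\sqrt{\log_2 x}}$. The key observation is the trivial upper bound
\[
s(n) \le \sum_{d \mid n} d \le n\,(1 + \log n) \le x^2 \qquad (n \le x,\ x\ \text{large}).
\]
The distinct prime factors of $s(n)$ exceeding $T$ have product dividing $s(n)$, so their number is at most $\log s(n) / \log T \le 2\log x / \log T = 2\sqrt{\log_2 x}$. Summing over $n \le x$ bounds the large-prime contribution by $O(x \sqrt{\log_2 x}) = o(x \log_2 x)$, and combining the two tail estimates with the identity above gives the lemma.

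I do not expect a genuine obstacle: the argument is purely a truncation, resting on the sparsity of small primes and on the pigeonhole bound for large prime factors of the integer $s(n) \le x^2$. The only step that rewards a moment's care is the calibration of the upper cutoff $x^{1/\sqrt{\log_2 x}}$: it is chosen small enough that each $s(n)$ has only $O(\sqrt{\log_2 x}) = o(\log_2 x)$ prime factors above it, yet large enough that the surviving range still captures essentially all of $\omega(s(n))$. A markedly larger cutoff would render the large-prime tail too expensive, whereas below the lower cutoff $\log_2 x$ the trivial per-prime bound already suffices.
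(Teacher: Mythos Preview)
Your proof is correct and follows essentially the same route as the paper: both arguments write $\omega(s(n))=\sum_{p\mid s(n)}1$, bound the small-prime contribution by $\pi(\log_2 x)=o(\log_2 x)$ per $n$, and bound the large-prime contribution via $s(n)\le x^2$ so that at most $2\sqrt{\log_2 x}$ primes exceeding $x^{1/\sqrt{\log_2 x}}$ can divide $s(n)$. The only cosmetic difference is that the paper handles both tails in a single per-$n$ count, whereas you treat the small primes after swapping the order of summation; the substance is identical.
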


\begin{proof}
We have
\[
\sum_{n \leq x} \omega(s(n)) = \sum_{n \leq x} \sum_{p : p \mid s(n)} 1.
\]

If $n \leq x$ then $s(n) \leq x^2$. The number of primes $p \notin (\log_2x, x^{1/\sqrt{\log_2x}}]$ dividing a number $m \leq x^2$ is $\leq 2\sqrt{\log_2x} + \pi(\log_2x) = o(\log_2x)$, and the lemma follows.
\end{proof}

\subsection{The exceptional set.} Define $\cE(x) := \{n \leq x : \text{at least one of } A, B, C, D, E, F \text{ fails}\}$, where
\begin{itemize}
 
	\item[A.] $P(n) > x^{1/\log_3x}$,
	\item[B.] $P(n)^2 \nmid n$,
	\item[C.] $P_2(n) > x^{1/\log_3x}$,
	\item[D.] $P_2(n) < xP(n)/2n$,
	\item[E.] $P_2(n)^2 \nmid n$
	\item[F.] if a prime $q$ divides $\gcd(n/P(n), \sigma(n/P(n)))$, then $q < \log_2 x$.
\end{itemize}

\begin{lemma}
$\#\cE(x) = o(x)$.
\end{lemma}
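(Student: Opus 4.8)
The plan is to treat the six defining conditions of $\cE(x)$ one at a time: I will show that for each of A--F the set of $n \leq x$ violating it has size $o(x)$, and then conclude by a union bound. Throughout I write $y = x^{1/\log_3 x}$ and $u = \log x / \log y = \log_3 x \to \infty$, let $\Psi(t,z)$ denote the number of $z$-smooth integers up to $t$, and let $\rho$ denote the Dickman function.

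Conditions A and C are the classical statements that almost all $n$ have both of their two largest prime factors large. For A, the set where $P(n) \leq y$ is precisely the set of $y$-smooth integers up to $x$, and de Bruijn's estimate gives $\Psi(x,y) = x\, u^{-(1+o(1))u} = o(x)$ since $u \to \infty$. For C, if $P_2(n) \leq y < P(n)$ then $P(n)$ must divide $n$ to the first power (otherwise $P_2(n) = P(n) > y$), so $m := n/P(n)$ is $y$-smooth and
\[
\#\{n \leq x : P_2(n) \leq y\} \leq \Psi(x,y) + \sum_{y < p \leq x} \Psi(x/p, y).
\]
Using the uniform bound $\Psi(t,y) \ll t\,\rho(\log t/\log y)$ together with Mertens' theorem, the sum is $\ll x \int_1^u \rho(u-s)\,\tfrac{ds}{s} \ll x/u = o(x)$, the integral being dominated by the range where $s$ is close to $u$.

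Conditions B, E and D are handled by a common device: a tail estimate over large primes together with a smoothness estimate for small ones. If B fails then $P(n)^2 \mid n$; splitting according to whether $P(n) > Q$ or $P(n) \leq Q$ bounds the count by $\sum_{p > Q} x/p^2 \ll x/(Q\log Q)$ plus $\Psi(x,Q)$, and letting $Q \to \infty$ after $x$ yields $o(x)$. Condition E is identical with $P_2(n)$ in place of $P(n)$, the small-prime part now being $\#\{n \leq x: P_2(n) \leq Q\} \ll_Q x/\log x$. For D, failure means $P_2(n) \geq x P(n)/2n$; writing $m = n/P(n)$ and $q = P_2(n) = P(m)$ this reads $mq \geq x/2$, and since $m \leq x/P(n)$ it forces $q \geq P(n)/2$. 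Thus the two largest prime factors lie within a factor of two, and
\[
\#\{n \leq x : \text{D fails}\} \leq \sum_{q}\ \sum_{q \leq p \leq 2q} \frac{x}{pq} \ll x \sum_{q} \frac{1}{q\log q},
\]
a convergent series; restricting to $q > Q$ for the tail and treating $q \leq Q$ by smoothness again gives $o(x)$.

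I expect the main obstacle to be condition F, the only one involving the arithmetic of $\sigma$. Here failure means that some prime $q \geq \log_2 x$ divides both $m = n/P(n)$ and $\sigma(m)$; since $\sigma$ is multiplicative, $q \mid \sigma(p^a)$ for some prime power $p^a \| m$. The dominant case is $a = 1$, where $q \mid \sigma(p) = p+1$, i.e.\ $p \equiv -1 \pmod q$, so that $n$ is divisible by $pq$ with $p$ in this progression. Counting these and applying Mertens' theorem in arithmetic progressions, in the form $\sum_{p \leq x,\, p \equiv -1 \,(q)} 1/p \ll \log_2 x/\phi(q)$, gives
\[
\#\{n \leq x : \text{F fails}\} \ll x \log_2 x \sum_{q \geq \log_2 x} \frac{1}{q\,\phi(q)} \ll \frac{x\log_2 x}{\log_2 x\,\log_3 x} = \frac{x}{\log_3 x} = o(x),
\]
with the prime powers $a \geq 2$ contributing a convergent and hence negligible amount. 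Summing the six $o(x)$ bounds completes the proof. The care required for F comes from the interaction between the multiplicative structure of $\sigma$ and the distribution of primes in residue classes, which is why I single it out as the crux.
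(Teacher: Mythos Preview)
Your argument follows the same condition-by-condition strategy as the paper and is essentially correct; the minor organisational differences (your two-step $Q \to \infty$ device for B, D, E in place of the paper's direct cutoff at $x^{1/\log_3 x}$, and your Dickman-integral treatment of C in place of the paper's two-threshold argument via $x^{1/\log_4 x}$) are all valid alternatives.

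The one spot that needs more than you wrote is the $a \geq 2$ subcase of condition F. The phrase ``a convergent and hence negligible amount'' is too quick: if one simply sums $x/(q p^a)$ over primes $q \geq \log_2 x$, primes $p$, and $a \geq 2$, the sum over $q$ diverges. One must use the constraint $q \mid \sigma(p^a)$, which forces $p^a > q/2$. The paper does this by noting that $n/q$ then has a squarefull divisor exceeding $q/2$, giving $O(x/q^{3/2})$ for each $q$ and hence $O(x/\sqrt{\log_2 x})$ in total; equivalently, for fixed $q$ one has $\sum_{p,\,a \geq 2,\,p^a > q/2} 1/p^a \ll q^{-1/2}$, which may be the ``convergent'' tail you had in mind. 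With that insertion your proof is complete.
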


Let $\Psi(x, y)$ denote the count of $y$-smooth numbers up to $x$; that is, the count of numbers $n \leq x$ such that $p \mid n \implies p \leq y$. The following upper bound estimate of de Bruijn \cite[Theorem 2]{db66} will be useful in proving the above lemma.

\begin{prop}\label{smooth}
Let $x \geq y \geq 2$ satisfy $(\log x)^2 \leq y \leq x$. Whenever $u := \frac{\log x}{\log y} \to \infty$, we have
\[
\Psi(x, y) \leq x/u^{u + o(u)}.
\]
\end{prop}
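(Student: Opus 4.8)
The plan is to prove this by \emph{Rankin's trick} (the Dirichlet-series majorization method), which reduces the whole estimate to the optimal choice of a single real parameter $\sigma$.

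First I would observe that for any $\sigma \in (0,1)$, every $y$-smooth $n \le x$ satisfies $(x/n)^\sigma \ge 1$, so dropping the constraint $n \le x$ and summing over all $y$-smooth integers gives
\[
\Psi(x,y) \le \sum_{\substack{n \ge 1 \\ p \mid n \Rightarrow p \le y}} \left(\frac{x}{n}\right)^\sigma = x^\sigma \prod_{p \le y}\left(1 - p^{-\sigma}\right)^{-1},
\]
the Euler factorization being valid because the series is supported on $y$-smooth integers and converges for $\sigma > 0$. Taking logarithms, it remains to bound $\sigma \log x + \sum_{p \le y}\sum_{k \ge 1} k^{-1} p^{-k\sigma}$. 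The prime-power terms with $k \ge 2$ contribute $\ll \sum_p p^{-2\sigma}$, which is negligible in our range, so the crux is the estimation of $\sum_{p \le y} p^{-\sigma}$.

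Next, writing $\sigma = 1 - \alpha$ with $\alpha := \frac{\log u}{\log y}$ (equivalently $s := (1-\sigma)\log y = \log u$, the near-optimal choice), I would bound the prime sum by partial summation using only the Chebyshev estimate $\pi(t) \ll t/\log t$:
\[
\sum_{p \le y} p^{-\sigma} \ll \int_2^y \frac{t^{-\sigma}}{\log t}\, dt = \int_{\log 2}^{\log y} \frac{e^{\alpha v}}{v}\, dv \sim \frac{e^{\alpha \log y}}{\alpha \log y} = \frac{y^{1-\sigma}}{(1-\sigma)\log y},
\]
the final asymptotic holding because $s = \alpha \log y = \log u \to \infty$ forces the integral to concentrate at its upper endpoint. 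With our choice this equals $e^s/s = u/\log u$. Meanwhile $\sigma \log x = \log x - \alpha \log x = \log x - u\log u$, so that $x^\sigma = x\, u^{-u}$. Combining the two contributions,
\[
\Psi(x,y) \le x\, u^{-u} \exp\left(O\!\left(\tfrac{u}{\log u}\right) + O(1)\right) = x\, u^{-u}\, u^{o(u)} = x / u^{u + o(u)},
\]
since $\exp(o(u\log u)) = u^{o(u)}$.

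The main obstacle, and where the hypotheses $(\log x)^2 \le y$ and $u \to \infty$ do the real work, is ensuring that every error term is genuinely $o(u)$ in the exponent. Specifically, I must check that $\alpha = \log u/\log y$ stays bounded away from $1$, so that $\sigma$ is a legitimate exponent in $(0,1)$ and the prime-power sum $\sum_p p^{-2\sigma}$ is controlled, and that the subleading contribution $e^s/s = u/\log u$ together with the partial-summation error is $o(u\log u)$; the condition $y \ge (\log x)^2$ guarantees $\log y \ge 2\log\log x$, which keeps $\alpha$ small enough and validates the endpoint concentration in the integral. I do not expect to need the full prime number theorem: because the target exponent already carries an $o(u)$, Chebyshev-quality bounds for $\pi(t)$ suffice throughout, and the estimate comes out uniformly as $u \to \infty$.
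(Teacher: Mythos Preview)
Your Rankin-trick argument is correct: the choice $\sigma = 1 - \tfrac{\log u}{\log y}$ gives $x^\sigma = x\,u^{-u}$, the hypothesis $y \ge (\log x)^2$ forces $\alpha = \tfrac{\log u}{\log y} < \tfrac12$ so that $\sigma > \tfrac12$ and the Euler product is controlled, and the prime sum $\sum_{p \le y} p^{-\sigma} \ll u/\log u$ yields an error $\exp(O(u/\log u)) = u^{o(u)}$ as you claim. One small point: rather than isolating the $k\ge 2$ terms, it is cleaner to use $-\log(1-t) \le t/(1-t)$ to bound the whole logarithm of the Euler product by $O\big(\sum_{p \le y} p^{-\sigma}\big)$ directly, which sidesteps any worry about how close $2\sigma$ is to $1$.

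As for comparison with the paper: there is nothing to compare. The paper does not prove this proposition at all; it merely quotes it as \cite[Theorem~2]{db66} (de Bruijn). Your self-contained derivation is therefore a genuine addition, and in fact Rankin's method is essentially how de Bruijn obtains the bound in the cited reference.
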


\begin{proof}[Proof of Lemma.]
Let $\cE_j(x) = \{n \leq x : \text{condition } j \text{ fails and all previous conditions hold}\}$, $j \in \{A, B, C, D, E, F\}$. If $n \in \cE_A(x) \cup \cE_B(x)$, then either $P(n) \leq x^{1/\log_3x}$ or $P(n) > x^{1/\log_3x}$ and $P(n)^2 \mid n$. By Proposition \ref{smooth}, the number of $n \leq x$ for which the former holds is $O(x/(\log_2x)^4)$, noting that $(\log_2x)^4 \ll (\log_2x)^{\log_4x} = (\log_3x)^{\log_3x}$. The number of $n \leq x$ for which the latter holds is $\ll x\sum_{p > x^{1/\log_3x}} p^{-2} \ll x\exp(-\log x/\log_3x)$, and this is also $O(x/(\log_2x)^4)$.

If $n \in \cE_C(x)$, we proceed as in \cite[Lemma 2.6]{pol14}. In this case, either (i) $P(n) \leq x^{1/\log_4x}$ or (ii) $P(n) > x^{1/\log_4x}$ and $P_2(n) \leq x^{1/\log_3x}$. The number of $n \leq x$ for which (i) holds is $O(x/(\log_3 x)^{10})$ by Proposition \ref{smooth}. For $n \leq x$ such that (ii) holds, write $n = mP$, where $P = P(n) > x^{1/\log_4x}$ and $P(m) \leq x^{1/\log_3x}$. Then $x/m \geq P > x^{1/\log_4x}$, and the number of such $P$ given $m$ is, by the prime number theorem,
\[
\ll \frac{x/m}{\log(x/m)} \leq \frac{x\log_4x}{m\log x}.
\]
Now we sum this over $m$ such that $P(m) \leq x^{1/\log_3x}$, obtaining that the number of $n \leq x$ for which (ii) holds is
\begin{align*}
\ll \frac{x\log_4x}{\log x} \sum_{m \,:\, P(m) \leq x^{1/\log_3x}} \frac{1}{m} &= \frac{x\log_4x}{\log x} \prod_{p \leq x^{1/\log_3x}} \left(1 - \frac{1}{p}\right)^{-1} \\
	&\ll \frac{x\log_4x}{\log x} \left(\frac{\log_3x}{\log x}\right)^{-1} = \frac{x\log_4x}{\log_3x}.
\end{align*}
Thus $\#\cE_C(x) = O(x\log_4x/\log_3 x)$.

For $\#\cE_D(x)$, note first that $xP(n)/2n > P(n)/2$. Thus any $n$ in $\cE_D(x)$ has a prime factor in $(P(n)/2, P(n))$, and the number of such integers $n$ with $P(n) > x^{1/\log_3(x)}$ is, by the prime number theorem,
\[
\leq x \sum_{q > x^{1/\log_3 x}} \sum_{q/2 < q' < q} \frac{1}{qq'} \ll x \sum_{q > x^{1/\log_3 x}} \frac{1}{q\log q} = O\left(\frac{x\log_3x}{\log x}\right).
\]

To bound $\#\cE_E(x)$, we consider $n \leq x$ with $P_2(n)^2 \mid n$ and $P_2(n) > x^{1/\log_3x}$. The number of such $n$ is $\ll x\sum_{p > x^{1/\log_3x}} p^{-2} \ll x\exp(-\log x/\log_3x)$.

Finally, suppose $q \mid \big(n/P(n), \sigma(n/P(n))\big)$ and $q \geq \log_2x$. Then $q \mid n$ and $q \mid \sigma(n)$. Write $n = q^es$, with $q \nmid s$. If $e \geq 2$, then $n$ has a squarefull divisor $\geq (\log_2x)^2$, and the number of such $n$ is $O(x/\log_2x)$.

So assume $e = 1$. Since $q \mid \sigma(n)$, we have $q \mid \sigma(s)$. Write $s = p_1^{e_1} \cdots p_k^{e_k}$; then $q \mid \sigma(p_i^{e_i})$ for some $i$. If $e_i \geq 2$, then we have $2p_i^{e_i} > \sigma(p_i^{e_i}) \geq q$, so $s$ has a squarefull divisor $\geq q/2$. The number of such $s \leq x/q$ is $O(x/q^{3/2})$, and summing this over $q \geq \log_2x$ gives that the number of possible $n$ is $O(x/\sqrt{\log_2x})$.

If $e_i = 1$, then $p_i \equiv -1 \pmod q$. By Brun-Titchmarsh and partial summation, the number of $s \leq x/q$ divisible by such a prime $p_i$ is
\[
\frac{x}{q} \sum_{\substack{p \leq x/q \\ p \equiv -1 \pmod q}} \frac{1}{p} \ll \frac{x\log_2x}{q^2},
\]
and summing this over $q \geq \log_2x$ we obtain that the number of possible $n$ in this case is $O(x/\log_3x).$

This finishes the proof, noting that all size bounds established are $o(x)$.
\end{proof}

\subsection{Primes in arithmetic progressions.} In the proof of Theorem \ref{sumomegasn}, we will need asymptotic estimates for the count of primes in fairly short progressions. The following theorem is sufficient, though we must exclude moduli which are multiples of a certain integer. As usual, $\pi(X)$ denotes the number of primes $p \leq X$, and $\pi(X; q, a)$ denotes the number of primes $p \leq X$ with $p \equiv a \pmod q$.


\begin{theorem}\label{siegel}
Let $X$ and $T$ satisfy $X \geq T \geq 2$, and suppose $q \leq T^{2/3}$. Then
\[
\pi(X; q, a) \sim \pi(X)/\varphi(q) \quad \text{ as } \quad \frac{\log X}{\log T} \to \infty
\]
uniformly for all coprime pairs $(a, q)$, except possibly for those $q$ which are multiples of some integer $q_1(T)$. Here $\varphi$ is the usual Euler phi function.
\end{theorem}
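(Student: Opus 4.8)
The plan is to deduce Theorem~\ref{siegel} from the explicit formula for $\psi(X,\chi) := \sum_{n\le X}\chi(n)\Lambda(n)$ together with the classical zero-free region and the Landau--Page theorem; the latter is precisely what produces the single exceptional modulus $q_1(T)$. First I would pass to prime powers: by partial summation it suffices to prove the analogous statement $\psi(X;q,a)\sim X/\varphi(q)$, since the contribution of proper prime powers and the conversion back to $\pi$ are of smaller order throughout the stated range. Expanding the congruence condition into Dirichlet characters modulo $q$,
\[
\psi(X;q,a) = \frac{1}{\varphi(q)}\sum_{\chi \bmod q}\bar\chi(a)\,\psi(X,\chi),
\]
the principal character contributes $\psi(X,\chi_0)/\varphi(q) = X/\varphi(q) + o(X/\varphi(q))$ via $\psi(X)\sim X$. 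Hence the whole problem reduces to showing $\sum_{\chi\ne\chi_0}|\psi(X,\chi)| = o(X)$: after dividing by $\varphi(q)$, this is exactly an error of smaller order than the main term $X/\varphi(q)$, so it is enough to prove $|\psi(X,\chi)| = o(X/\varphi(q))$ for each non-principal $\chi$.

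For each such $\chi$ I would replace it by the primitive character inducing it (the discrepancy coming from the primes dividing $q$ is $O((\log q)(\log X))$, hence negligible) and invoke the truncated explicit formula
\[
\psi(X,\chi) = -\sum_{|\gamma|\le T_0}\frac{X^\rho}{\rho} + O\!\left(\frac{X(\log qX)^2}{T_0}\right),
\]
with $\rho=\beta+i\gamma$ ranging over the nontrivial zeros of $L(s,\chi)$. The key input is a zero-free region for the entire family of moduli up to $Q := T^{2/3}$: there is an absolute $c>0$ such that $\prod_{q\le Q}\prod_{\chi}L(s,\chi)$ has no zero with $\beta>1-c/\log(Q(|t|+2))$, with at most one exception, which if present is a real simple zero $\beta_1$ attached to a unique real primitive character $\chi_1$ of a unique modulus $q_1=q_1(T)\le Q$. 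I would \emph{define} $q_1(T)$ to be this modulus (and set $q_1=1$ if no exceptional zero exists). Since $\chi_1$ induces a character modulo $q$ precisely when $q_1\mid q$, restricting to $q$ with $q_1\nmid q$ guarantees that every $L(s,\chi)$ with $\chi\bmod q$ is zero-free in the above region. Combining this with the standard bound $\sum_{0<|\gamma|\le T_0}1/|\gamma|\ll(\log qT_0)^2$ and choosing the truncation height so as to balance the two error terms (for instance $T_0=T$, whence $\log(QT_0)\le\tfrac53\log T$), each surviving zero obeys $X^\beta\le X\exp(-c\log X/\log(QT_0))$, and the resulting saving is governed by $\log X/\log T$. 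As $\log X/\log T\to\infty$ this forces $\psi(X,\chi)=o(X/\varphi(q))$ uniformly over the non-exceptional characters; summing over the $\varphi(q)-1$ of them gives $\psi(X;q,a)\sim X/\varphi(q)$, and partial summation yields $\pi(X;q,a)\sim\pi(X)/\varphi(q)$.

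The main obstacle is exactly the possible Siegel zero $\beta_1$. If $q_1\mid q$, then $\chi_1$ induces one of the characters modulo $q$, and the corresponding term $-\chi_1(a)X^{\beta_1}/(\varphi(q)\beta_1)$ in the character sum can be as large as the main term $X/\varphi(q)$, so the asymptotic genuinely fails; this is precisely why the conclusion must omit the moduli that are multiples of $q_1(T)$, and it is the reason one works with an excluded modulus rather than appealing to Siegel's (ineffective) lower bound for $1-\beta_1$. Beyond this, I expect the only delicate points to be bookkeeping: confirming that the reduction to primitive characters and the prime-power terms are negligible, and verifying that the chosen balance of $T_0$ against the zero-free region really does render both error terms of smaller order than $X/\varphi(q)$ uniformly across the range $q\le T^{2/3}$ as $\log X/\log T\to\infty$.
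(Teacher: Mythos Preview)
Your framework is the right one and matches the paper's in spirit: decompose into characters, use the explicit formula, and invoke Landau--Page to isolate a single exceptional modulus $q_1(T)$. The paper does exactly this, but by citing a ready-made estimate from Bombieri's proof of Linnik's theorem rather than re-deriving it.

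However, there is a genuine gap in your quantitative analysis. With only the classical zero-free region $\beta<1-c/\log\bigl(q(|\gamma|+2)\bigr)$ and the bound $\sum_{|\gamma|\le T_0}|\rho|^{-1}\ll(\log qT_0)^2$, your estimate for the zero sum (taking $T_0=T$) is
\[
\Bigl|\sum_{|\gamma|\le T_0}\frac{X^\rho}{\rho}\Bigr|\ \ll\ X\exp\!\Bigl(-c\,\frac{\log X}{\log T}\Bigr)(\log T)^2.
\]
For this to be $o(X/\varphi(q))$ uniformly in $q\le T^{2/3}$ you need, upon taking logarithms, roughly $c\log X/\log T \gg \log T$, i.e.\ $(\log T)^2=o(\log X)$. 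That is strictly stronger than the stated hypothesis $\log X/\log T\to\infty$. So the sentence ``As $\log X/\log T\to\infty$ this forces $\psi(X,\chi)=o(X/\varphi(q))$'' does not follow from your inputs; in the regime $(\log T)^2\asymp \log X$ (which in fact covers the paper's intended application, where $T=x^{c/\sqrt{\log_2 x}}$ and $X\ge x^{1/\log_3 x}$) your bound gives nothing.

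What closes the gap is an additional ingredient you have not invoked: a \emph{log-free zero-density estimate} of the type used in Linnik's theorem. This is precisely what the Bombieri formula quoted by the paper encodes, producing an error term of shape $\dfrac{X}{\varphi(q)}\exp(-c_1 A)$ with $A=\log X/\log T$, which is visibly $o(X/\varphi(q))$ as $A\to\infty$ without any further hypothesis on $T$. The paper then notes separately that when $T\le(\log X)^6$ one may simply appeal to Siegel--Walfisz; you should include that case split as well, since your truncation error $X(\log qX)^2/T_0$ with $T_0=T$ also fails to be $o(X/\varphi(q))$ for such small $T$.
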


\begin{proof}
Referring to the proof of Linnik's theorem in \cite[p. 55]{bom74}, we have
\begin{align*}
\sum_{\substack{p \leq X \\ p \equiv a \pmod q}} \log p = \frac{X}{\varphi(q)} + O\left(\frac{X}{\varphi(q)} \exp(-c_1 A)\right) + O\left(\frac{X\log X}{T}\right) + O\left(\frac{1}{\varphi(q)}X^{1/2}T^5\right),
\end{align*}
unless $q$ is divisible by a certain exceptional modulus $q_1 = q_1(T)$, and where $A$ is such that $T = X^{1/A}$. Notice that the quantity $A$ tends to infinity. Therefore, the first and last $O$-terms are $o(X/\varphi(q))$, as is the middle $O$-term, provided $T > (\log X)^6$; but if $T \leq (\log X)^6$, Theorem \ref{siegel} follows from the Siegel-Walfisz theorem. One now obtains the desired asymptotic for $\pi(X; q, a)$ by standard arguments.
\end{proof}

We will also make use of the following fact \cite[Lemma 2.7]{pol14}:

\begin{prop}\label{pollack27}
Let $q$ be a natural number with $q \leq x^{\tfrac{1}{2\log_3x}}$. The number of $n \leq x$ not belonging to $\cE_A(x) \cup \cE_B(x)$ for which $q$ divides $s(n)$ is
\[
\ll \frac{\tau(q)}{\varphi(q)}\cdot x\log_3x,
\]
where $\tau(q)$ denotes the number of divisors of $q$.
\end{prop}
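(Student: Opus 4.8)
The plan is to isolate the largest prime factor of $n$ and turn the divisibility $q \mid s(n)$ into a congruence condition on that prime. Since $n \notin \cE_A(x) \cup \cE_B(x)$, its largest prime $P := P(n)$ satisfies $P > x^{1/\log_3 x}$ and $P \,\|\, n$; write $n = Pm$, so that $P \nmid m$ and $m < x^{1 - 1/\log_3 x}$. As $m$ is coprime to $P$, multiplicativity of $\sigma$ gives $\sigma(n) = (P+1)\sigma(m)$, whence
\[
s(n) = (P+1)\sigma(m) - Pm = P\,s(m) + \sigma(m).
\]
Thus $q \mid s(n)$ is equivalent to the linear congruence $s(m)\,P \equiv -\sigma(m) \pmod q$ in the unknown $P$. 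First I would fix $m$ and count the admissible primes $P \le x/m$, then sum over $m$.

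For fixed $m$, set $g := \gcd(s(m), q)$. The congruence $s(m)P \equiv -\sigma(m) \pmod q$ is solvable precisely when $g \mid \sigma(m)$; and since $\sigma(m) = s(m) + m$ with $g \mid s(m)$, this holds if and only if $g \mid m$. When it does, the solutions form a single residue class modulo $q/g$. I would then bound the number of primes $P \le x/m$ in this class by Brun--Titchmarsh: since $P > x^{1/\log_3 x}$ while $q/g \le q \le x^{1/(2\log_3 x)}$, the ratio $(x/m)/(q/g)$ exceeds $x^{1/(2\log_3 x)} \to \infty$, so the count is $\ll \frac{x/m}{\varphi(q/g)\log x}\cdot \log_3 x$. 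Using $\varphi(q/g) \ge \varphi(q)/g$, the number of admissible $P$ for this $m$ is $\ll \frac{g}{\varphi(q)}\cdot\frac{x\log_3 x}{m\log x}$, and it is $0$ unless $g \mid m$.

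The decisive step is that summing this bound over $m$ collapses cleanly once the constraint $g \mid m$ is used. Grouping the admissible $m$ according to the value $d := \gcd(s(m),q) \mid q$, and relaxing the conditions defining each group to the single requirement $d \mid m$, I obtain
\[
\sum_{\substack{m \le x \\ \gcd(s(m),q)\mid m}} \frac{\gcd(s(m),q)}{m} \le \sum_{d \mid q} d \sum_{\substack{m\le x\\ d\mid m}}\frac1m = \sum_{d\mid q}\;\sum_{\mu\le x/d}\frac1\mu \ll \tau(q)\log x,
\]
the factor $d$ cancelling after the substitution $m = d\mu$. Multiplying by the per-$m$ factor $\frac{x\log_3 x}{\varphi(q)\log x}$ yields the claimed bound $\ll \frac{\tau(q)}{\varphi(q)}\,x\log_3 x$.

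The genuinely substantive points are the observation that solvability forces $g \mid m$ --- which is exactly what makes the weighted sum telescope and produces $\tau(q)$ from the $\tau(q)$ divisors of $q$ --- together with the uniformity of Brun--Titchmarsh across the moduli $q/g$ and ranges $x/m$ in play; the hypotheses $P > x^{1/\log_3 x}$ and $q \le x^{1/(2\log_3 x)}$ are calibrated precisely to keep $(x/m)/(q/g)$ large, so the prime count never degenerates. I expect the congruence bookkeeping (solvability and the estimate $\varphi(q/g) \ge \varphi(q)/g$) to be the only place demanding real care, and I note that the argument avoids any circular appeal to the proposition itself, since the final sum over $\mu$ is estimated trivially.
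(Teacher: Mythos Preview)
The paper does not supply its own proof of this proposition; it is simply quoted from \cite[Lemma~2.7]{pol14}. Your argument is correct and is the standard one: factor $n = Pm$ with $P = P(n)$, convert $q \mid s(n)$ into a congruence on $P$ modulo $q/\gcd(s(m),q)$, bound the primes in that progression by Brun--Titchmarsh (the hypotheses $P > x^{1/\log_3 x}$ and $q \le x^{1/(2\log_3 x)}$ are exactly what keep the modulus small relative to the range), and then exploit the key fact that solvability forces $\gcd(s(m),q) \mid m$ so that the weighted sum over $m$ collapses to $\tau(q)\log x$. One small point worth making explicit: if the resulting residue class modulo $q/g$ is not coprime to $q/g$, then any prime $P$ in it divides $q$ and hence is at most $x^{1/(2\log_3 x)} < x^{1/\log_3 x}$, so there are no admissible $P$ and the bound holds trivially; otherwise Brun--Titchmarsh applies as you state.
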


\section{An average result for $\omega(s(n))$.}

First we prove an average order result for $\omega(s(n))$:

\begin{theorem}\label{sumomegasn}
As $x \to \infty$,
\[
\sum_{n \leq x \,:\, n \notin \cE(x)} \omega(s(n)) \sim x\log\log x.
\]
\end{theorem}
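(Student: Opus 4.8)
The plan is to evaluate the sum by the same truncation used in Lemma~\ref{truncate} and then interchange the order of summation. Writing $\omega(s(n)) = \sum_{p \mid s(n)} 1$ and discarding the primes outside $(\log_2 x, x^{1/\sqrt{\log_2 x}}]$ exactly as in Lemma~\ref{truncate}---the number of such primes dividing any $m \le x^2$ is $o(\log_2 x)$, so the discarded part is $\le x\cdot o(\log_2 x) = o(x\log\log x)$---it suffices to show
\[
\sum_{\log_2 x < p \le x^{1/\sqrt{\log_2 x}}} N_p \sim x\log\log x, \qquad N_p := \#\{n \le x : n \notin \cE(x),\ p \mid s(n)\}.
\]
For the inner count I would exploit the factorization guaranteed off $\cE(x)$: each such $n$ is uniquely $n = mP$ with $P = P(n)$ prime, $P \nmid m$ (condition B), and $P(m) = P_2(n)$. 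Since $\sigma$ is multiplicative and $P \nmid m$,
\[
s(n) = \sigma(m)(P+1) - mP = \sigma(m) + P\,s(m),
\]
so $p \mid s(n)$ becomes the linear congruence $P\,s(m) \equiv -\sigma(m) \pmod p$ in the prime $P$.

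The next step is a case analysis on this congruence, which is the point of condition F. If $p \nmid s(m)$, the congruence pins $P$ to the single residue class $a_{m,p} \equiv -\sigma(m)\,s(m)^{-1} \pmod p$; moreover $a_{m,p} \not\equiv 0$, since $a_{m,p} \equiv 0$ forces $p \mid \sigma(m)$ and then the only admissible prime would be $P = p$, which is excluded by $P > x^{1/\log_3 x}$. If instead $p \mid s(m)$, a solution $P$ exists only when $p \mid \sigma(m)$, whence $p \mid \gcd(m, \sigma(m))$; but $p > \log_2 x$, so this is forbidden by condition F. Thus for every $n \notin \cE(x)$ counted by $N_p$ the prime $P$ lies in exactly one reduced residue class modulo $p$, and the degenerate possibility that ``every $P$ works'' never arises.

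It then remains to count, for each admissible $m$, the primes $P \equiv a_{m,p} \pmod p$ in the window $P(m) < P \le x/m$. I would apply Theorem~\ref{siegel} with the uniform choice $T = x^{2/\sqrt{\log_2 x}}$, so that $p \le T^{2/3}$ for every $p$ in range and $\log(x/m)/\log T \gg \sqrt{\log_2 x}/\log_3 x \to \infty$ uniformly in $m$, the last step using $x/m \ge P > x^{1/\log_3 x}$. Condition D, which amounts to $P(m) < x/(2m)$, ensures that this window has length comparable to $x/m$, so that the count of $P$ in the progression is asymptotic to $1/\varphi(p)$ times the count of all admissible $P$. Summing over the admissible $m$ reconstructs $\#\{n \le x : n \notin \cE(x)\} \sim x$, giving $N_p \sim x/(p-1)$; feeding this into the outer sum and applying Mertens' theorem over $(\log_2 x, x^{1/\sqrt{\log_2 x}}]$ yields $\sum_p 1/(p-1) \sim \log\log x$ and hence the claimed asymptotic.

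The main obstacle I anticipate is making the application of Theorem~\ref{siegel} uniform and summable: the asymptotic must hold with an error $o(\pi(x/m)/\varphi(p))$ simultaneously across all $m$ and all $p$ in range, and the accumulated error terms must sum to $o(x\log\log x)$. A related nuisance is the exceptional modulus $q_1(T)$ of Theorem~\ref{siegel}; since $p$ is prime, the only $p$ in range that can be a multiple of $q_1(T)$ is $p = q_1(T)$ itself (when that is prime), and Proposition~\ref{pollack27} bounds its contribution by $\ll x\log_3 x/\log_2 x = o(x)$, so at worst a single prime is lost harmlessly.
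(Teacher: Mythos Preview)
Your proposal is correct and follows essentially the same route as the paper: truncate via Lemma~\ref{truncate}, write $n=mP$ and reduce $p\mid s(n)$ to the congruence $P\equiv -\sigma(m)s(m)^{-1}\pmod p$ (with condition~F killing the degenerate case), apply Theorem~\ref{siegel} with condition~D controlling the window, and finish with Mertens. The one step you gloss over is that ``summing over admissible $m$'' recovers only $\#\{n\notin\cE(x):p\nmid s(m)\}$, not $\#\{n\notin\cE(x)\}$; the paper handles the discrepancy via Lemma~\ref{pdividessm} (itself a consequence of Proposition~\ref{pollack27}, which you already invoke), giving an extra $O\big(x\log_3 x\log_4 x/(p-1)^2\big)$ that is harmless after summing on $p$.
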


This result will serve as a stepping stone towards Theorem \ref{variance}.

By Lemma \ref{truncate},
\[
\sum_{n \leq x \,:\, n \notin \cE(x)} \omega(s(n)) = \left(\sum_{\substack{\log_2x < p \leq x^{1/\sqrt{\log_2 x}} \\ p \neq q_1(T)}} \sum_{\substack{n \leq x  \,:\, n \notin \cE(x)\\ p \mid s(n)}} 1\right) + o(x\log_2 x),
\]
where $q_1(T)$ is the exceptional modulus coming from Theorem \ref{siegel}. Eventually we will be counting primes in certain progressions modulo $p \leq x^{1/\sqrt{\log_2 x}}$, and so it will suffice to take $T = x^{1.5/\sqrt{\log_2x}}$. To apply Theorem \ref{siegel}, we need that $p$ is not a multiple of $q_1(T)$; but since $p$ is prime, this is the same as requiring $p \neq q_1(T)$, and this excludes at most one summand above. Fix a prime $p \in (\log_2x, x^{1/\sqrt{\log_2x}}]$, $p \neq q_1(T)$ and consider the inner sum above.

Since $n \notin \cE(x)$, we can write $n = mP$, $P := P(n)$, where $P \nmid m$, $P > x^{1/\log_3x}$, $x^{1/\log_3x} < P(m) < x/2m$ (this follows from condition $D$, noting that $m = n/P$), and $p \mid (m, \sigma(m)) \implies p < \log_2x$. Now
\[
\sum_{\substack{n \leq x \,:\, n \notin \cE(x) \\ p | s(n)}} 1 = \sum_{\substack{n \leq x \,:\, n \notin \cE(x) \\ p | s(n) \\ p \mid s(m)}} 1 + \sum_{\substack{n \leq x \,:\, n \notin \cE(x) \\ p | s(n) \\ p \nmid s(m)}} 1.
\]
But this first sum is actually empty! If $p \mid s(n) = s(mP) = Ps(m) + \sigma(m)$, and $p \mid s(m)$, then $p \mid \sigma(m)$ and hence $p \mid m$ (since $m = \sigma(m) - s(m)$). Since $n \notin \cE(x)$, this forces $p < \log_2x$, contradicting our choice of the fixed prime $p$.

If $p \nmid s(m)$, then since $p \mid s(n) = Ps(m) + \sigma(m)$, we have
\[
P \equiv -\sigma(m)s(m)^{-1} \pmod p.
\]

For convenience, we now define the following notation: Write
\[
\sideset{}{'}\sum_m := \sum_{\substack{x^{1/\log_3x} < m \leq x^{1 - 1/\log_3x} \\ p \nmid s(m) \\ x^{1/\log_3x} < P(m) < x/2m \\ P(m)^2 \nmid m \\ q \mid (m, \sigma(m)) \implies q < \log_2x}}.
\]

Now
\[
\sum_{\substack{n \leq x \,:\, n \notin \cE(x) \\ p | s(n) \\ p \nmid s(m)}} 1 = \sideset{}{'}\sum_m \sum_{\substack{P(m) < P \leq x/m \\ P \equiv -\sigma(m)s(m)^{-1} \pmod p}} 1.
\]

The inner sum is equal to
\[
\pi(x/m; p, -\sigma(m)s(m)^{-1}) - \pi(P(m); p, -\sigma(m)s(m)^{-1}).
\]

We now use Theorem \ref{siegel} to rewrite the terms above, with $T = x^{1.5/\sqrt{\log_2x}}$. Note that $P(m), x/m > x^{1/\log_3x}$, so $P(m)$ and $x/m$ are both greater than any fixed power of $T$ for large enough $x$. Since $p \neq q_1(T)$, we have
\begin{align*}
\sideset{}{'}\sum_m &\Big(\pi(x/m; p, -\sigma(m)s(m)^{-1}) - \pi(P(m); p, -\sigma(m)s(m)^{-1})\Big) \\
	&= \frac{1}{p-1}\sideset{}{'}\sum_m \Big(\pi(x/m) - \pi(P(m))\Big) + o\left(\frac{1}{p-1}\sideset{}{'}\sum_m \Big(\pi(x/m) + \pi(P(m))\Big)\right).
\end{align*}

To deal with the $o$-term, observe that from our conditions on $m$,
\begin{align*}
\pi(x/m) + \pi(P(m)) &\leq \pi(x/m) + \pi(x/2m) \\
	&\leq 2\pi(x/m) \leq 10\big(\pi(x/m) - \pi(P(m))\big)
\end{align*}
for large enough $x$, and so the above sum on $m$ is
\begin{align*}
\sim \frac{1}{p-1}\sideset{}{'}\sum_m \Big(\pi(x/m) - \pi(P(m))\Big).
\end{align*}

A prime $P$ counted by the term $\pi(x/m) - \pi(P(m))$ corresponds to an integer $n = Pm$, with $P^2 \nmid n$ and $p \nmid s(m)$; the conditions imposed on $m$ guarantee that $n \leq x$ and $n \notin \cE(x)$. It is clear that every $n \leq x$ with $n \notin \cE(x)$ and $p \nmid s(m)$ will be counted by such a summand. Thus
\begin{align*}
\sideset{}{'}\sum_m \pi(x/m) &- \pi(P(m)) = \#\{n \leq x : n = mP(n) \notin \cE(x), p \nmid s(m)\} \\
	&= \#\{n \leq x : n \notin \cE(x)\} - \#\{n \leq x : n = mP(n) \notin \cE(x), p \mid s(m)\}.
\end{align*}

The following lemma allows us to estimate the subtrahend.

\begin{lemma}\label{pdividessm}
For a fixed prime $\log_2x < q \leq x^{1/\sqrt{\log_2x}}$, the number of $n \leq x$ not belonging to $\cE(x)$ (so $n = mP(n)$) such that $q \mid s(m)$ is 
\[
\ll \frac{x\log_3x\log_4x}{q-1}.
\]
\end{lemma}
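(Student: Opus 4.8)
The plan is to reduce the count to a weighted sum over $m$, and then exploit the factorization of $m$ at its largest prime.

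First I would fix an admissible $m$ with $q \mid s(m)$ and count the possible $P = P(n)$. Since $n = mP \notin \cE(x)$ forces $P > x^{1/\log_3 x}$ and $mP \le x$, the number of such $P$ is at most $\pi(x/m) \ll \frac{x/m}{\log(x/m)} \ll \frac{x\log_3 x}{m\log x}$, using that $x/m > x^{1/\log_3 x}$ makes $\log(x/m) \gg \log x/\log_3 x$. Summing over $m$, it therefore suffices to prove
\[
\sum_{m \,:\, q \mid s(m)} \frac{1}{m} \ll \frac{\log x \, \log_4 x}{q-1},
\]
where $m$ runs over the values permitted by the conditions defining $\cE(x)$.

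Next I would write $m = ar$ with $r = P(m)$; since $P(m)^2 \nmid m$ we have $\gcd(a,r)=1$ and the now-familiar identity $s(m) = r\,s(a) + \sigma(a)$. The case $q \mid s(a)$ can be disposed of first: together with $q \mid s(m)$ it gives $q \mid \sigma(a)$ and hence $q \mid a \mid m$, so $q \mid \gcd(m,\sigma(m))$; as $q > \log_2 x$, this contradicts condition $F$, so this case contributes nothing. In the remaining case $q \nmid s(a)$, the congruence $q \mid s(m)$ pins $r$ to the single residue class $r \equiv -\sigma(a)s(a)^{-1} \pmod q$, which is coprime to $q$ (otherwise the only candidate prime would be $q$ itself, which is far below the range of $r$).

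The crux is then to bound, uniformly in $a$ and in the residue class $c$, the sum $\sum 1/r$ over primes $r \equiv c \pmod q$ with $x^{1/\log_3 x} < r \le \sqrt{x}$. Here I would apply Brun--Titchmarsh, $\pi(t;q,c) \ll \frac{t}{\varphi(q)\log(t/q)}$, which is effective because $q \le x^{1/\sqrt{\log_2 x}}$ is a negligible power of the lower cutoff $x^{1/\log_3 x}$; partial summation turns the range into $\frac{1}{\varphi(q)}\int \frac{dt}{t\log(t/q)}$, whose size is governed by $\log\frac{\tfrac12\log x - \log q}{\log x/\log_3 x - \log q} \ll \log_4 x$. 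This is exactly where the truncation $r > x^{1/\log_3 x}$ matters: it collapses what would otherwise be a factor of order $\log_2 x$ down to $\log_4 x$, and it is the step I expect to be the main obstacle, since it requires the uniform Brun--Titchmarsh bound together with a careful check that the restricted range genuinely yields $\log_4 x$ rather than a larger iterated logarithm. The inner sum is thus $\ll \log_4 x/(q-1)$. Finally, summing over $a$ (which ranges over integers below $x^{1-1/\log_3 x} < x$) gives $\sum_a 1/a \ll \log x$, and multiplying the three factors yields $\frac{x\log_3 x}{\log x}\cdot \log x \cdot \frac{\log_4 x}{q-1} = \frac{x\log_3 x\,\log_4 x}{q-1}$, as claimed; the bookkeeping around the excluded case $q \mid s(a)$ and the coprimality of the residue class is routine by comparison.
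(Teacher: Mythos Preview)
Your proof is correct, but it takes a different route from the paper's. The paper fixes $P=P(n)$ first and, for each such $P$, invokes Proposition~\ref{pollack27} (Pollack's Lemma~2.7) as a black box to bound the number of $m\le x/P$ with $q\mid s(m)$ by $\ll \tfrac{x\log_3 x}{(q-1)P}$; summing over primes $P\in (x^{1/\log_3 x},x]$ then produces the factor $\log_4 x$ via Mertens. You instead fix $m$ first, count $P$ trivially, and then control $\sum_m 1/m$ by peeling off $r=P(m)$ and applying Brun--Titchmarsh plus partial summation directly to the progression $r\equiv -\sigma(a)s(a)^{-1}\pmod q$; your $\log_4 x$ arises from the restricted range $r\in(x^{1/\log_3 x},\sqrt{x}]$. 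In effect you are re-deriving the relevant special case of Proposition~\ref{pollack27} inline, one level deeper in the factorization. This makes your argument self-contained (no appeal to \cite{pol14}) at the cost of some extra length; the paper's version is shorter precisely because it cites that estimate ready-made. Both approaches use condition~$F$ to dispose of the degenerate case, and both extract the $\log_4 x$ from the same underlying truncation $P_2(n)>x^{1/\log_3 x}$.
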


\begin{proof}
Write $n = mP$, with $P = P(n)$. Since $n \notin \cE(x)$, $P(m)^2 \nmid m$ and $P(m) > x^{1/\log_3x}$. Note that $q \leq x^{1/\sqrt{\log_2x}}$, and $(x/P)^{\frac{1}{2\log_3x}} > x^{\frac{1}{2(\log_3x)^2}} > x^{1/\sqrt{\log_2x}}$ for large enough $x$. By Proposition \ref{pollack27}, the number of $m \leq x/P$ not in $\cE(x)$ for which $q \mid s(m)$ is
\[
\ll \frac{1}{q-1} \cdot \frac{x\log_3x}{P}.
\]
Summing over $P \in (x^{1/\log_3x}, x]$ gives the result.
\end{proof}

By the above lemma, we have
\[
\sideset{}{'}\sum_m \Big(\pi(x/m) - \pi(P(m))\Big) = x + O\left(\frac{x\log_3x\log_4x}{p-1}\right) + O(\#\cE(x)).
\]

Putting everything together, we have shown that
\[
\sum_{n \leq x \,:\, n \notin \cE(x)} \omega(s(n)) = \sum_{\substack{\log_2x < p \leq x^{1/\sqrt{\log_2 x}} \\ p \neq q_1(T)}} \left( \frac{x}{p-1} + O\left(\frac{x\log_3x\log_4x}{(p-1)^2}\right) + o\left(\frac{x}{p-1}\right)\right).
\]
The $O$-term contributes
\[
\ll x\log_3x\log_4x \sum_{t > \log_2x} \frac{1}{t^2} \ll \frac{x\log_3x\log_4x}{\log_2x} = o(x\log_2x).
\]

By Mertens' first theorem,
\begin{align*}
\sum_{\substack{\log_2x < p \leq x^{1/\sqrt{\log_2 x}} \\ p \neq q_1(T)}} \frac{1}{p-1} &= \log_2(x^{1/\sqrt{\log_2x}}) - \log_2(\log_2(x)) + O(1) \\
	&= (\log_2x)(1 + o(1)).
\end{align*}
Thus,
\[
\sum_{n \leq x \,:\, n \notin \cE(x)} \omega(s(n)) = x\log_2x + o(x\log_2x),
\]
as desired.

\section{Proof of Theorem \ref{variance}.}

Notice that
\begin{align*}
\sum_{n \notin \cE(x)} (\omega(s(n)) - \log_2x)^2 &= \sum_{n \notin \cE(x)} \omega(s(n))^2 - 2\log_2x\sum_{n \notin \cE(x)} \omega(s(n)) + x(\log_2x)^2(1 + o(1)) \\
	&= \sum_{n \notin \cE(x)} \omega(s(n))^2 - x(\log_2x)^2(1 + o(1))
\end{align*}
by Theorem \ref{sumomegasn}. Hence, the following lemma implies Theorem \ref{variance}.

\begin{lemma}\label{sumomegasnsq}
As $x \to \infty$,
\[
\sum_{n \leq x \,:\, n \notin \cE(x)} \omega(s(n))^2 \sim x(\log_2x)^2.
\]
\end{lemma}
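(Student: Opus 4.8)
The plan is to linearize the square and reduce to an off-diagonal double sum over primes, which I then evaluate by the method of Theorem~\ref{sumomegasn}. Let $W(n)$ denote the number of prime factors of $s(n)$ lying in the truncation range $(\log_2 x, x^{1/\sqrt{\log_2 x}}]$ and $R(n)$ the number of its remaining prime factors. The pointwise bound from the proof of Lemma~\ref{truncate} gives $R(n) \leq 2\sqrt{\log_2 x} + \pi(\log_2 x) = o(\log_2 x)$ for every $n \leq x$, since $s(n) \leq x^2$. Expanding $\omega(s(n))^2 = W(n)^2 + 2W(n)R(n) + R(n)^2$ and invoking $\sum_{n \notin \cE(x)} W(n) \leq \sum_{n \notin \cE(x)} \omega(s(n)) \ll x\log_2 x$ from Theorem~\ref{sumomegasn}, the two cross terms contribute $o(x(\log_2 x)^2)$. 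Writing $W(n)^2$ as a sum over ordered pairs of range-primes dividing $s(n)$ and splitting off the diagonal (which contributes $\sum_{n \notin \cE(x)} W(n) \ll x\log_2 x$), the lemma reduces to proving
\[
S := \sum_{\substack{p \neq q \\ p,q \in (\log_2 x,\, x^{1/\sqrt{\log_2 x}}]}} \ \sum_{\substack{n \leq x \,:\, n \notin \cE(x) \\ pq \mid s(n)}} 1 \sim x(\log_2 x)^2,
\]
where the outer sum runs over ordered pairs of distinct primes.

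I would fix such a pair and count $n = mP$ with $P = P(n)$, just as before. From $s(n) = Ps(m) + \sigma(m)$ and $n \notin \cE(x)$, the requirement $pq \mid s(n)$ forces $p \nmid s(m)$ and $q \nmid s(m)$ (otherwise $p$ or $q$ divides both $m$ and $\sigma(m)$, contradicting condition $F$), whereupon each of $p \mid s(n)$ and $q \mid s(n)$ pins $P$ to a residue class modulo the respective prime. Since $P$ is a prime exceeding $P(m) > p, q$, a valid $P$ can exist only when $p \nmid \sigma(m)$ and $q \nmid \sigma(m)$; I may therefore impose these two conditions on $m$ at no cost, and then the combined residue $-\sigma(m)s(m)^{-1}$ is coprime to $pq$. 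By the Chinese Remainder Theorem the two congruences merge into one modulo $pq$, and I count the admissible $P \in (P(m), x/m]$ using Theorem~\ref{siegel} with modulus $pq$, taking $T = x^{3/\sqrt{\log_2 x}}$ so that $pq \leq x^{2/\sqrt{\log_2 x}} = T^{2/3}$ while $\log X/\log T \geq \sqrt{\log_2 x}/(3\log_3 x) \to \infty$ uniformly for $X \in \{P(m), x/m\}$ (both exceeding $x^{1/\log_3 x}$). As $pq$ is squarefree, $q_1(T) \mid pq$ forces $q_1(T) \in \{p, q, pq\}$; discarding those pairs removes only $o(x(\log_2 x)^2)$ from $S$, by the crude bound of Proposition~\ref{pollack27} applied to the modulus $pq$.

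With the exceptional modulus avoided and the residue coprime to $pq$, Theorem~\ref{siegel} gives, uniformly in $m$ and in the pair,
\[
\sum_{\substack{n \leq x \,:\, n \notin \cE(x) \\ pq \mid s(n)}} 1 = \frac{1 + o(1)}{(p-1)(q-1)} \sideset{}{'}\sum_m \big(\pi(x/m) - \pi(P(m))\big),
\]
where $\sideset{}{'}\sum_m$ now carries the conditions of Theorem~\ref{sumomegasn} together with $p, q \nmid s(m)$ and $p, q \nmid \sigma(m)$, and the auxiliary $o$-term is absorbed via $\pi(x/m) + \pi(P(m)) \leq 10(\pi(x/m) - \pi(P(m)))$ exactly as before. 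As in the single-prime case, the sum on the right counts the $n \leq x$, $n \notin \cE(x)$ meeting those divisibility conditions; removing the forbidden conditions by inclusion--exclusion---Lemma~\ref{pdividessm} handles $p \mid s(m)$ and $q \mid s(m)$, and a Brun--Titchmarsh estimate as in the treatment of $\cE_F(x)$ handles $p \mid \sigma(m)$ and $q \mid \sigma(m)$---yields
\[
\sideset{}{'}\sum_m \big(\pi(x/m) - \pi(P(m))\big) = x + O\!\left(x\log_2 x\log_4 x\,(\tfrac{1}{p} + \tfrac{1}{q})\right) + O(\#\cE(x)).
\]

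Summing over ordered pairs $p \neq q$, the main term becomes $x(1+o(1)) \sum_{p \neq q} \frac{1}{(p-1)(q-1)}$. Since $\sum_p \frac{1}{p-1} = (\log_2 x)(1 + o(1))$ by Mertens while $\sum_p \frac{1}{(p-1)^2} \ll 1/\log_2 x$, we have
\[
\sum_{p \neq q} \frac{1}{(p-1)(q-1)} = \Big(\sum_p \frac{1}{p-1}\Big)^2 - \sum_p \frac{1}{(p-1)^2} \sim (\log_2 x)^2,
\]
so the main term is $\sim x(\log_2 x)^2$. The $O$-terms sum to $\ll x\log_2 x\log_4 x$ and the $\#\cE(x)$ terms to $\ll \#\cE(x)(\log_2 x)^2$, both $o(x(\log_2 x)^2)$; hence $S \sim x(\log_2 x)^2$, proving the lemma. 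I expect the main obstacle to be the uniformity bookkeeping: Theorem~\ref{siegel} must be applied with the composite modulus $pq$ under a single choice of $T$ good for all pairs at once, its error must be $o(1)$ uniformly in both $m$ and $(p,q)$ so that the factor $1 + o(1)$ survives the double summation, and the non-coprime residue classes (those $m$ with $p$ or $q$ dividing $\sigma(m)$) must be shown to be inert. Once this reduction is secured, every remaining step mirrors the proof of Theorem~\ref{sumomegasn}.
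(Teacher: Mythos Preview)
Your proposal is correct and follows essentially the same route as the paper: truncate, split off the diagonal, fix a pair $(p,q)$, write $n=mP$, reduce to a single congruence on $P$ modulo $pq$ via the Chinese Remainder Theorem, apply Theorem~\ref{siegel} with $T=x^{3/\sqrt{\log_2 x}}$, recover $x$ from the prime count using Lemma~\ref{pdividessm}, and finish with Mertens. The only visible differences are cosmetic: the paper disposes of the exceptional modulus by excluding $p=p_1(T)$ and $q=p_1(T)$ (the largest prime factor of $q_1(T)$) rather than by bounding the discarded pairs via Proposition~\ref{pollack27}, and the paper does not explicitly impose $p,q\nmid\sigma(m)$---this is harmless there because such $m$ contribute zero primes $P$ anyway, whereas you add the condition and then remove it with a Brun--Titchmarsh estimate, which is a bit more work but equally valid.
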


We have
\[
\sum_{n \leq x \,:\, n \notin \cE(x)} \omega(s(n))^2 = \sum_{n \notin \cE(x)} \left(\sum_{p \mid s(n)} 1\right)^2 = \sum_{n \notin \cE(x)} \sum_{p, q \,:\, p, q \mid s(n)} 1,
\]
where the sum is over pairs of primes $p, q$. This inner sum is equal to
\[
\sum_{\substack{p, q \,:\, p, q \mid s(n) \\ p \neq q}} 1 + \sum_{p \mid s(n)} 1,
\]
and hence
\[
\sum_{n \leq x \,:\, n \notin \cE(x)} \omega(s(n))^2 = \sum_{n \notin \cE(x)} \sum_{\substack{p, q \,:\, p, q \mid s(n) \\ p \neq q}} 1 + o(x(\log_2x)^2)
\]
by Theorem \ref{sumomegasn}.

We can again truncate the sum in question.

\begin{lemma}
\[
\sum_{n \notin \cE(x)} \omega(s(n))^2 = \left(\sum_{n \notin \cE(x)} \sum_{\substack{\log_2x < p \leq x^{1/\sqrt{\log_2 x}} \\ p \mid s(n)}} \sum_{\substack{\log_2x < q \leq x^{1/\sqrt{\log_2 x}} \\ q \mid s(n) \\ q \neq p}} 1 \right) + o(x(\log_2x)^2).
\]
\end{lemma}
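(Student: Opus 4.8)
The plan is to show that truncating both prime variables $p,q$ to the interval $I := (\log_2 x, x^{1/\sqrt{\log_2 x}}]$ costs only $o(x(\log_2 x)^2)$. Since the diagonal has already been separated off, it suffices to control the difference between the full off-diagonal sum $\sum_{p \neq q,\, p,q \mid s(n)} 1$ and its restriction to $p, q \in I$. For each $n$ I would split the prime divisors of $s(n)$ into those lying in $I$ and those outside: write $\omega_0(s(n))$ for the number of primes $p \mid s(n)$ with $p \in I$, and $\omega_1(s(n)) := \omega(s(n)) - \omega_0(s(n))$ for the rest. The restricted off-diagonal sum is $\omega_0(\omega_0 - 1)$ and the full one is $\omega(\omega - 1)$ (abbreviating $\omega = \omega(s(n))$, etc.), so their difference is
\[
\omega(\omega - 1) - \omega_0(\omega_0 - 1) = 2\omega_0\omega_1 + \omega_1^2 - \omega_1 .
\]

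The key input is the uniform tail bound already extracted in the proof of Lemma \ref{truncate}: because $s(n) \leq x^2$ for every $n \leq x$, the number of prime divisors of $s(n)$ outside $I$ satisfies $\omega_1(s(n)) \leq 2\sqrt{\log_2 x} + \pi(\log_2 x) =: B$, and $B = o(\log_2 x)$. Summing the difference over $n \notin \cE(x)$, the last two terms are immediate: $\sum_n \omega_1^2 \leq B^2\, x$ and $\sum_n \omega_1 \leq B\, x$, each of which is $o(x(\log_2 x)^2)$ since $B = o(\log_2 x)$.

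The main term—and the only place where the work of the previous section is genuinely needed—is the cross term $\sum_n 2\omega_0\omega_1$. Here $\omega_0(s(n))$ can be as large as $\omega(s(n))$ for an individual $n$, so a pointwise estimate is useless; the difference must instead be controlled on average. Using $\omega_1 \leq B$ pointwise together with $\omega_0 \leq \omega(s(n))$, I would estimate
\[
\sum_{n \notin \cE(x)} 2\,\omega_0(s(n))\,\omega_1(s(n)) \leq 2B \sum_{n \notin \cE(x)} \omega(s(n)) \sim 2B \cdot x\log_2 x
\]
by Theorem \ref{sumomegasn}, and this is $o(x(\log_2 x)^2)$ because $B = o(\log_2 x)$. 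Collecting the three bounds yields the asserted identity. The only conceptual subtlety is recognizing that the cross term requires the average order of $\omega(s(n))$ rather than any uniform bound; the remaining estimates are routine consequences of the uniform count of excluded prime factors.
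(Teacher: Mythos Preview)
Your proof is correct and follows essentially the same approach as the paper. Both arguments rest on the same two inputs: the uniform pointwise bound $\omega_1(s(n)) \le 2\sqrt{\log_2 x} + \pi(\log_2 x) = o(\log_2 x)$ from Lemma~\ref{truncate}, and the average order estimate $\sum_{n\notin\cE(x)}\omega(s(n))\sim x\log_2 x$ from Theorem~\ref{sumomegasn}. The only difference is bookkeeping: the paper writes $\omega(s(n)) = \omega'(s(n)) + o(\log_2 x)$ and squares, whereas you work directly with the off-diagonal difference $\omega(\omega-1)-\omega_0(\omega_0-1) = 2\omega_0\omega_1+\omega_1^2-\omega_1$; these are algebraically equivalent decompositions leading to the same three error terms.
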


\begin{proof}
By Lemma \ref{truncate}, we have $\omega(s(n)) = \omega'(s(n)) + o(\log_2x)$, where $\omega'(m)$ denotes the number of prime divisors $p$ of $m$ with $p \in (\log_2x, x^{1/\sqrt{\log_2x}}]$. Hence,
\[
\sum_{n \notin \cE(x)} \omega(s(n))^2 = \sum_{n \notin \cE(x)} \omega'(s(n))^2 + o\left(\log_2x\sum_{n \notin \cE(x)} \omega'(s(n))\right) + o(x(\log_2x)^2).
\]
Since $\omega'(s(n)) \leq \omega(s(n))$, we have $\sum_{n \notin \cE(x)} \omega'(s(n)) \ll x\log_2x$ by Theorem \ref{sumomegasn}. The lemma follows.
\end{proof}

\subsection{Proof of Lemma \ref{sumomegasnsq}.}

As before, we fix primes $p, q \in (\log_2x, x^{1/\sqrt{\log_2x}}]$ and count the number of $n \notin \cE(x)$ where $p, q \mid s(n)$. Eventually we will need to use Theorem \ref{siegel} to count primes modulo $pq$, and so we take $T = x^{3/\sqrt{\log_2x}}$ in the theorem. However, we must ensure that $pq$ is not a multiple of some integer $q_1(T)$. Let $p_1(T)$ denote the largest prime factor of $q_1(T)$. If $pq$ is a multiple of $q_1(T)$, then $p_1(T)$ divides $pq$, which forces $p_1(T) = p$ or $p_1(T) = q$. Therefore, we will insist that $p, q \neq p_1(T)$, which excludes at most one each of $p$ and $q$.


Write $n = mP$, $P := P(n)$, where $P \nmid m$, $P > x^{1/\log_3x}$, $x^{1/\log_3x} < P(m) < x/2m$, and a prime $\ell \mid (n, \sigma(n)) \implies \ell < \log_2x$. As before, we can split the above sum in two, with one sum over $n$ such that $p, q \nmid s(m)$ and the other over $n$ such that either $p$ or $q$ divides $s(m)$; and as before, the latter sum will be empty, by the same argument. Thus we are reduced to considering $n \notin \cE(x)$ with $p, q \mid s(n)$ and $p, q \nmid s(m)$.

If both $p$ and $q$ divide $s(n)$ but not $s(m)$, then since $s(n) = Ps(m) + \sigma(m)$, we have
\[
P \equiv -\sigma(m)s(m)^{-1} \pmod{pq}.
\]
Therefore
\[
\sum_{\substack{n \notin \cE(x) \\ p, q | s(n) \\ p, q \nmid s(m)}} 1 = \sideset{}{'}\sum_m \sum_{\substack{P(m) < P \leq x/m \\ P \equiv -\sigma(m)s(m)^{-1} \pmod{pq}}} 1,
\]
where now $\sideset{}{'}\sum_m$ includes the condition $q \nmid s(m)$.

The inner sum is equal to
\[
\pi\big(x/m; pq, -\sigma(m)s(m)^{-1}\big) - \pi\big(P(m); pq, -\sigma(m)s(m)^{-1}\big).
\]

We once again use Theorem \ref{siegel} on the terms above, with $T = x^{3/\sqrt{\log_2x}}$. The analysis proceeds exactly as before, with the factor of $1/\varphi(p)$ replaced by $1/\varphi(pq)$. We have in the end that the inner sum (over $n$) is asymptotically equal to
\[
\frac{1}{\varphi(pq)}\big(\#\{n \leq x : n \notin \cE(x)\} - \#\{n \leq x : n = mP(n) \notin \cE(x), \text{ either } p \text{ or } q \mid s(m)\}\big).
\]

Applying Lemma \ref{pdividessm} twice, we have that the number of $n \leq x$ not belonging to $\cE(x)$ such that either $p$ or $q$ divides $s(m)$ is
\[
\ll x\log_3x\log_4x\left(\frac{1}{p-1} + \frac{1}{q-1}\right).
\]

Putting everything together, we have shown that
\begin{align*}
\sum_{n \leq x \,:\, n \notin \cE(x)} &\omega(s(n))^2 = x\sum_{p} \sum_{q} \frac{1}{(p-1)(q-1)} \\ & + \sum_{p}\sum_{q} O\left(\frac{x\log_3x\log_4x}{(p-1)^2(q-1)} + \frac{x\log_3x\log_4x}{(p-1)(q-1)^2}\right) + o(x(\log_2 x)^2),
\end{align*}
where the sums over $p$ and $q$ have the restrictions $p, q \in (\log_2x, x^{1/\sqrt{\log_2x}}], q \neq p$, and $p, q \neq p_1(T)$ (from Theorem \ref{siegel}).
Now, since $q \leq x^{1/\sqrt{\log_2x}}$ and $\sum_p 1/p^2$ converges,
\[
\sum_p \sum_q \frac{1}{(p-1)^2(q-1)} = O(\log_2x)
\]
by Mertens' first theorem. Hence the $O$-term contributes
\[
\ll x\log_2x\log_3x\log_4x = o(x(\log_2x)^2),
\]
and so
\[
\sum_{n \leq x \,:\, n \notin \cE(x)} \omega(s(n))^2 = x\sum_{p} \sum_{q} \frac{1}{(p-1)(q-1)} + o(x(\log_2x)^2).
\]
Another application of Mertens' theorem tells us
\[
\sum_{n \leq x \,:\, n \notin \cE(x)} \omega(s(n))^2 = x(\log_2x)^2 + o(x(\log_2x)^2),
\]
which completes the proof.

\section{From $\omega(s(n))$ to $\Omega(s(n))$}

We conclude by showing that the result proved in the previous section holds with $\omega(s(n))$ replaced by $\Omega(s(n))$.

\begin{lemma}\label{Omega2}
\[
\sum_{n \notin \cE(x)} \big(\Omega(s(n)) - \omega(s(n))\big)^2 = o(x(\log_2x)^2).
\]
\end{lemma}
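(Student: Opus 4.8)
The plan is to begin from the pointwise identity
\[
\Omega(m) - \omega(m) = \#\{(p,j) : j \ge 2,\ p^j \mid m\} = \sum_{\substack{p^j \mid m \\ j \ge 2}} 1,
\]
so that, after squaring and summing,
\[
\sum_{n \notin \cE(x)}\big(\Omega(s(n)) - \omega(s(n))\big)^2 = \sum_{\substack{p^j,\,q^k \\ j,k \ge 2}} \#\{n \notin \cE(x) : p^j \mid s(n),\ q^k \mid s(n)\}.
\]
For each pair the relevant modulus is $d = \operatorname{lcm}(p^j, q^k)$, which is always a squarefull number $\ge 4$. Grouping the pairs according to $d$ rewrites the right-hand side as $\sum_{d \text{ squarefull}} R(d)\,N(d)$, where $N(d) := \#\{n \notin \cE(x) : d \mid s(n)\}$ and $R(d) = d^{o(1)}$ is the number of ordered pairs of prime powers (each with exponent $\ge 2$) whose least common multiple is $d$. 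Everything then reduces to estimating $N(d)$ for squarefull $d$ and showing the weighted sum is $o(x(\log_2 x)^2)$.

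For $d \le x^{1/(2\log_3 x)}$ I would apply Proposition \ref{pollack27} directly with modulus $d$, obtaining $N(d) \ll \frac{\tau(d)}{\varphi(d)}\, x \log_3 x$. Because $d$ ranges only over squarefull numbers (of which there are $O(t^{1/2})$ up to $t$), and $\varphi(d) \gg d^{1-o(1)}$ while $R(d)\tau(d) = d^{o(1)}$, the series $\sum_{d \text{ squarefull}} R(d)\tau(d)/\varphi(d)$ behaves like $\sum d^{-1+o(1)}$ over a sparse set and hence converges. Thus this range contributes $\ll x \log_3 x = o(x(\log_2 x)^2)$; it is the ``main'' but harmless term.

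The genuine difficulty is the tail $d > x^{1/(2\log_3 x)}$, where Proposition \ref{pollack27} no longer applies. For those terms I would first pass to the crude pointwise bound $\Omega(s(n)) - \omega(s(n)) \le \Omega(s(n)) \ll \log x$: a surviving pair with $\operatorname{lcm} > x^{1/(2\log_3 x)}$ forces $s(n)$ to possess a squarefull divisor exceeding $x^{1/(2\log_3 x)}$, so the entire tail is
\[
\ll (\log x)^2 \cdot \#\{n \notin \cE(x) : s(n)\text{ has a squarefull divisor} > x^{1/(2\log_3 x)}\} \le (\log x)^2 \sum_{\substack{d > x^{1/(2\log_3 x)} \\ d \text{ squarefull}}} N(d).
\]
It therefore suffices to show that this last sum is super-polynomially small. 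To bound $N(d)$ for large squarefull $d$ I would use the decomposition $n = mP$ with $P = P(n)$ available for $n \notin \cE(x)$: the congruence $d \mid s(n) = Ps(m) + \sigma(m)$ pins $P$ down to a single residue class modulo $d/\gcd(s(m),d)$, whose prime count I would estimate by Brun--Titchmarsh. Two structural facts keep this under control: first, $\sigma(n) \ll n\log\log n$ forces every squarefull divisor of $s(n)$ to satisfy $d \ll x\log_2 x$, capping the range of $d$; second, $\gcd(s(m),d)$ divides $\gcd(m,\sigma(m))$ (from $m = \sigma(m) - s(m)$), which by condition $F$ is $\log_2 x$-smooth, so the effective modulus $d/\gcd(s(m),d)$ cannot collapse. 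Granting a uniform estimate of the shape $N(d) \ll x(\log x)^{O(1)}/\varphi(d)$, the squarefull tail $\sum_{d > x^{1/(2\log_3 x)}} \varphi(d)^{-1} \ll \exp(-c\log x/\log_3 x)$ absorbs the powers of $\log x$ and yields $o(x)$ for the whole tail, comfortably beating the required $o\big(x(\log_2 x)^2/(\log x)^2\big)$.

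The step I expect to be the main obstacle is obtaining that uniform bound $N(d) \ll x(\log x)^{O(1)}/\varphi(d)$ in the boundary regime where the modulus $d/\gcd(s(m),d)$ is comparable to or larger than $x/m$. There Brun--Titchmarsh degenerates to ``at most one prime $P$ per $m$,'' and the naive count over admissible $m$ is far too lossy to be summed over $d$; the genuine input is that such $n$ are rare because $P$ must be prime, which the trivial per-$m$ count does not see. Handling this regime carefully — exploiting the cap $d \ll x\log_2 x$ and the $\log_2 x$-smoothness of $\gcd(m,\sigma(m))$ to control the few surviving $m$ — is where the real work of the lemma lies.
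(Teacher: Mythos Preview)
Your small-$d$ range is fine, but the proposal leaves the tail unproven: you explicitly flag the uniform bound $N(d) \ll x(\log x)^{O(1)}/\varphi(d)$ for squarefull $d > x^{1/(2\log_3 x)}$ as ``the main obstacle'' and then stop. The Brun--Titchmarsh route you sketch genuinely breaks down once $d/\gcd(s(m),d)$ exceeds $x/m$, and the structural inputs you cite do not rescue it. In particular, condition $F$ only says that the \emph{primes} dividing $\gcd(m,\sigma(m))$ lie below $\log_2 x$; it does not bound the size of that gcd, so your claim that ``the effective modulus cannot collapse'' is unjustified --- nothing prevents $d$ from being, say, a large power of $2$ with $\gcd(s(m),d)$ nearly as large. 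And even granting non-collapse, for each large $d$ you would still need to show that only negligibly many $m$ admit any prime $P$ at all, which is no easier than the original count.

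The paper avoids this problem entirely with a device you are missing. Instead of estimating $N(p^k)$ for $p^k$ beyond the reach of Proposition~\ref{pollack27}, it passes to a \emph{smaller} divisor: if $p \le x^{1/(10\log_3 x)}$ and $p^k \mid s(n)$ with $p^k > x^{1/(2\log_3 x)}$, then certainly $p^{\ell(p)} \mid s(n)$, where $\ell(p)$ is the largest exponent with $p^{\ell(p)} \le x^{1/(2\log_3 x)}$. Now Proposition~\ref{pollack27} applies to the modulus $p^{\ell(p)}$, and because $p^{\ell(p)} > x^{1/(2\log_3 x)}/p \ge x^{2/(5\log_3 x)}$, the resulting bound $\ll x\log_3 x \cdot \ell(p)/p^{\ell(p)}$ is so small that it absorbs both the at most $O(\log x)$ exponents $k$ collapsing to the same $\ell(p)$ and the crude sum over $p$. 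Primes $p > x^{1/(10\log_3 x)}$ are handled trivially, since $s(n) \le x^2$ can have only $O(\log_3 x)$ prime-power divisors with such a base. The off-diagonal terms are treated the same way, replacing any oversized $p^k$ or $q^j$ by $p^{\ell(p)}$ or $q^{\ell(q)}$ so that the product modulus never leaves the range of Proposition~\ref{pollack27}. No estimate on $N(d)$ for large $d$ is ever needed.
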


It follows quickly from this lemma that $\sum_{n \notin \cE(x)} \big(\Omega(s(n)) - \log_2x\big)^2 = o(x(\log_2x)^2)$. Adding and subtracting $\omega(s(n))$ inside the square and expanding, we have
\begin{align*}
\sum_{n \notin \cE(x)} &\big(\Omega(s(n)) - \log_2x\big)^2 = \sum_{n \notin \cE(x)} \big(\Omega(s(n)) - \omega(s(n))\big)^2 + \sum_{n \notin \cE(x)} \big(\omega(s(n)) - \log_2x\big)^2 \\ &+ 2\sum_{n \notin \cE(x)} \big[\big(\Omega(s(n)) - \omega(s(n))\big)\big(\omega(s(n)) - \log_2x\big)\big].
\end{align*}

The first and second sums are $o(x(\log_2x)^2)$, by the above lemma and Theorem \ref{variance}, respectively. An application of the Cauchy-Schwarz inequality shows us that the last term squared is 
\[
\ll \left(\sum_{n \notin \cE(x)} \big(\Omega(s(n)) - \omega(s(n))\big)^2\right)\left(\sum_{n \notin \cE(x)} \big(\omega(s(n)) - \log_2x\big)^2\right);
\]
using Lemma \ref{Omega2} and Theorem \ref{variance} once more and taking square roots, we see that this is also $o(x(\log_2x)^2)$.

\begin{proof}[Proof of Lemma \ref{Omega2}]

We wish to estimate from above the quantity
\begin{align*}
\sum_{n \notin \cE(x)} \big(\Omega(s(n)) - \omega(s(n))\big)^2 &= \sum_{n \leq x \,:\, n \notin \cE(x)} \left(\sum_{\substack{p^k \mid s(n) \\ k \geq 2}} 1 \right)^2 \\
	&= \sum_{n \notin \cE(x)} \sum_{\substack{p^k, q^j \mid s(n) \\ k, j \geq 2 \\ p \neq q}} 1 + \sum_{n \notin \cE(x)} \sum_{\substack{p^k, p^j \mid s(n) \\ k, j \geq 2}} 1.
\end{align*}
We handle the ``$p = q$'' sum first. If $p^k \mid s(n)$ and $j \leq k$, the condition $p^k, p^j \mid s(n)$ is satisfied $k-1$ times given $p$, and so the sum is
\[
\ll 2 \sum_{n \notin \cE(x)} \sum_{\substack{p^k \mid s(n) \\ k \geq 2}} k = \sum_{k \geq 2} k \sum_{n \notin \cE(x)} \sum_{p \;:\; p^k \mid s(n)} 1.
\]
A number $m \leq x^2$ has at most $\tfrac{20}{k}\log_3x$ primes $p > x^{1/10\log_3x}$ with $p^k \mid m$. Define $L_1 := \lfloor 30\log_3x \rfloor$. If $p > x^{1/10\log_3x}$, then $p^{L_1} > x^2$, so certainly for $k > L_1$ the condition $p^k \mid s(n)$ cannot be met. Therefore,
\begin{align*}
\sum_{k \geq 2} k \sum_{n \notin \cE(x)} \sum_{\substack{p > x^{1/10\log_3x} \\ p^k \mid s(n)}} 1 \leq 20x\log_3x \sum_{k=2}^{L_1} 1 = O(x\log_3x \cdot L_1),
\end{align*}
and $x\log_3x \cdot L_1 \ll x(\log_3x)^2$.

It remains to consider
\[
\sum_{n \notin \cE(x)} \sum_{\substack{p \leq x^{1/10\log_3x} \\ p^k \mid s(n) \\ k \geq 2}} k = \sum_{p \leq x^{1/10\log_3x}} \sum_{\substack{n \notin \cE(x) \\ p^k \mid s(n) \\ k \geq 2}} k = \sum_{p \leq x^{1/10\log_3x}} \sum_{\substack{n \notin \cE(x) \\ p^k \mid s(n) \\ p^k \leq x^{1/2\log_3x} \\ k \geq 2}} k + \sum_{p \leq x^{1/10\log_3x}} \sum_{\substack{n \notin \cE(x) \\ p^k \mid s(n) \\ p^k > x^{1/2\log_3x} \\ k \geq 2}} k.
\]
For the first sum, the condition $p^k \leq x^{1/2\log_3x}$ allows us to apply Proposition \ref{pollack27}, which gives
\[
\sum_{p \leq x^{1/10\log_3x}} \sum_{\substack{n \notin \cE(x) \\ p^k \mid s(n) \\ p^k \leq x^{1/2\log_3x} \\ k \geq 2}} k \ll x\log_3x \sum_{k = 2}^{L_2} \sum_{p \leq x^{1/10\log_3x}} \frac{k^2}{p^k},
\]
with $L_2 = \lfloor 3 \log x \rfloor$. But the sum of $k^2/p^k$ over all $p \geq 2$ and all $k \geq 2$ is $O(1)$, so this is $O(x\log_3x)$.

For the second sum, define $\ell(p) := \max\{m \in \bN : p^m \leq x^{1/2\log_3x}\}$. Notice that $\ell(p) < \log x$ trivially and that $p^{\ell(p)} > x^{1/2\log_3x}/p$. The second sum is bounded from above by
\[
\sum_{\substack{p \leq x^{1/10\log_3x} \\ p^k > x^{1/2\log_3x} \\ k \geq 2}} \sum_{\substack{n \notin \cE(x) \\ p^{\ell(p)} \mid s(n)}} k \ll x\log_3x \sum_{k = 2}^{L_2} k \sum_{p \leq x^{1/10\log_3x}} \frac{\ell(p)}{p^{\ell(p)}},
\]
using Proposition \ref{pollack27} once more. The above inequalities then show that this sum is
\[
\ll \frac{x(\log x)^3\log_3x}{x^{1/2\log_3x}} \cdot x^{1/10\log_3x} = o(x).
\]
Thus $\sum_{n \notin \cE(x)}\big(\Omega(s(n)) - \omega(s(n))\big) \ll x\log_3x\log_4x$, which is $o(x\log_2x)$.

We now turn our attention to $\sum_{n \notin \cE(x)} \sum_{p^k, q^j \mid s(n)} 1$, with $k, j \geq 2$ and $p \neq q$. Arguments similar to those used before show
\[
\sum_{n \notin \cE(x)} \sum_{\substack{p^k \mid s(n) \\ k \geq 2}} \sum_{\substack{q^j \mid s(n) \\ j \geq 2 \\ q \neq p}} 1 = \sum_{n \notin \cE(x)} \sum_{\substack{p^k \mid s(n) \\ k \geq 2}} \sum_{\substack{q \leq x^{1/10\log_3x} \\ q^j \mid s(n) \\ j \geq 2 \\ q \neq p}} 1 + o(x\log_2x\log_3x),
\]
and $p$ can certainly be restricted in the same way. Rearranging the sum, we have
\[
\sum_{n \notin \cE(x)} \sum_{\substack{p \leq x^{1/10\log_3x} \\ p^k \mid s(n) \\ k \geq 2}} \sum_{\substack{q \leq x^{1/10\log_3x} \\ q^j \mid s(n) \\ j \geq 2 \\ q \neq p}} 1 = \sum_{p \leq x^{1/10\log_3x}} \sum_{\substack{q \leq x^{1/10\log_3x} \\ q \neq p}} \sum_{\substack{n \notin \cE(x) \\ p^k, q^j \mid s(n) \\ k, j \geq 2}} 1.
\]
We now proceed in essentially the same way as before. Write the inner sum over $q$ as two sums, one over $q^j \leq x^{1/4\log_3x}$ and the other over $q^j > x^{1/4\log_3x}$. Do the same for the outer sum over $p$. When the dust settles, we are left with four sums to handle, one for each possible combination of ranges for $p^k$ and $q^j$.

First we handle the case $p^k, q^j \leq x^{1/4\log_3x}$. Using Proposition \ref{pollack27}, we obtain
\begin{align*}
\sum_{p \leq x^{1/10\log_3x}} \sum_{\substack{q \leq x^{1/10\log_3x} \\ q \neq p}} \sum_{\substack{n \notin \cE(x) \\ p^k, q^j \mid s(n) \\ p^k, q^j \leq x^{1/4\log_3x} \\ k, j \geq 2}} 1 &\ll x\log_3x\sum_{k \geq 2} \sum_{j \geq 2} \sum_{p \leq x^{1/10\log_3x}} \sum_{\substack{q \leq x^{1/10\log_3x} \\ q \neq p}} \frac{kj}{p^kq^j} \\
 &\ll x\log_3x.
\end{align*}

If $p^k, q^j > x^{1/4\log_3x}$, we define $\ell(p) = \max\{m : p^m \leq x^{1/4\log_3x}\}$ and, as before, obtain that this sum is
\[
\ll x\log_3x \sum_{k \geq 2} \sum_{j \geq 2} \sum_{p \leq x^{1/10\log_3x}} \sum_{q \leq x^{1/10\log_3x}} \frac{\ell(p)\ell(q)}{p^{\ell(p)}q^{\ell(q)}} \ll \frac{x(\log x)^4\log_3x}{x^{1/2\log_3x}}\cdot x^{1/5\log_3x},
\]
which is $o(x)$.

Assume now that $q^j \leq x^{1/4\log_3x} \leq p^k$; the final case is completely similar. Then this sum is bounded from above by
\begin{align*}
\sum_{\substack{p \leq x^{1/10\log_3x} \\ p^k > x^{1/4\log_3x} \\ k \geq 2}} \sum_{\substack{q \leq x^{1/10\log_3x} \\ q^j \leq x^{1/4\log_3x} \\ j \geq 2 }} \sum_{\substack{n \notin \cE(x) \\ p^{\ell(p)}, q^j \mid s(n)}} 1 &\ll x\log_3x \sum_{k = 2}^{L_2} \sum_{j \geq 2} \sum_{p \leq x^{1/10\log_3x}} \sum_{\substack{q \leq x^{1/10\log_3x} \\ q \neq p}} \frac{\ell(p)j}{p^{\ell(p)}q^j} \\
	&\ll \frac{x(\log x)^2 \log_3x}{x^{1/3\log_3x}}. \qedhere
\end{align*}
\end{proof}

\noindent\textbf{Remark.} We know, by the celebrated Erd\H os - Kac theorem, that (roughly speaking) $\omega(n)$ is normally distributed with mean and variance $\log\log n$. The corresponding theorem for $\omega(\sigma(n))$ follows from methods of Erd\H os and Pomerance \cite{ep85}: $\omega(\sigma(n))$ is also normally distributed, but with mean $\tfrac{1}{2}(\log\log n)^2$ and standard deviation $\tfrac{1}{\sqrt{3}}(\log\log{n})^{3/2}$. One hopes that something similar can be said about $\omega(s(n))$. The results of the present article indicate that $s(n)$ typically has just as many prime factors as $n$; for this reason, we expect the Erd\H os - Kac theorem to hold with $\omega(s(n))$ in place of $\omega(n)$.

\section*{Acknowledgements}

The problem tackled here first arose in conversations between Paul Pollack and Carl Pomerance; the author is indebted to both. The author would like to extend further gratitude to Paul Pollack for advice and suggestions in the course of writing this article.

\bibliographystyle{amsalpha}
\bibliography{refs}

\end{document}